\theoremstyle{plain}
\newtheorem{theorem}{Theorem}[section]
\newtheorem{lemma}[theorem]{Lemma}
\newtheorem{claim}[theorem]{Claim}
\newtheorem{corollary}[theorem]{Corollary}
\newtheorem{remark}[theorem]{Remark}
\newtheorem{definition}[theorem]{Definition}
\newcommand{\Bern}{\text{Bern}}
\newcommand{\Z}{\mathbb{Z}}
\newcommand{\C}{\mathbb{C}}
\newcommand{\E}{\mathbb{E}}
\newcommand{\Var}{\text{Var}}
\newcommand{\Cov}{\text{Cov}}
\newcommand{\mcal}[1]{\mathcal{#1}}
\title{On subgraphs with degrees of prescribed residues in the random graph}
\author{Asaf Ferber \thanks{Department of Mathematics, University of California, Irvine.
Email: \href{mailto:asaff@uci.edu} {\nolinkurl{asaff@uci.edu}}.
Research supported in part by NSF Awards DMS-1954395 and DMS-1953799.}\and Liam Hardiman \thanks{Department of Mathematics, University of California, Irvine.
Email: \href{mailto:lhardima@uci.edu} {\nolinkurl{lhardima@uci.edu}}.}\and Michael Krivelevich\thanks{School of Mathematical Sciences, Tel Aviv University, Tel Aviv, Israel. Email: \href{mailto: krivelev@tauex.tau.ac.il} {\nolinkurl{krivelev@tauex.tau.ac.il}.}
Research supported in part by USA--Israel BSF grant 2018267 and by ISF grant 1261/17.}}
\date{\today}
\begin{document}

\maketitle

\begin{abstract}
    We show that with high probability the random graph $G_{n, 1/2}$ has an induced subgraph of linear size, all of whose degrees are congruent to $r\pmod q$ for any fixed $r$ and $q\geq 2$.
    More generally, the same is true for any fixed distribution of degrees modulo $q$.
    Finally, we show that with high probability we can partition the vertices of $G_{n, 1/2}$ into $q+1$ parts of nearly equal size, each of which induces a subgraph all of whose degrees are congruent to $r\pmod q$.
    Our results resolve affirmatively a conjecture of Scott, who addressed the case $q=2$.
\end{abstract}

\section{Introduction}
In his comprehensive problem book \cite{lovasz1993} (see Ex. 5.17) Lov\'asz states an unpublished but well known result of Gallai:  every graph admits a vertex partition into two sets, each inducing a subgraph with all degrees even.
This result guarantees the existence of (at least) two things: a large induced subgraph (at least one of the parts contains at least half of the vertices), and a vertex partition into subgraphs, with all degrees congruent to zero modulo 2.
We can ask if these guarantees still hold if we change the residue and/or modulus of the desired subgraph degrees. Define the following function to address the existence of a large induced subgraph of a given graph.
For a graph $G$ and integers $r\geq 0$ and $q\geq 2$ we let
$f(G, r, q)$ be the maximum order of an induced subgraph of $G$ with all degrees congruent to $r\pmod q$ (we set $f(G, r, q)=0$ if such a subgraph does not exist).
In particular, it follows from Gallai's result that $f(G,0,2)\geq |V(G)|/2$ for every graph $G$.

Arguably  the most studied case is that of odd subgraphs, i.e., when $r=1,q=2$.
Since an odd graph cannot contain isolated vertices, we restrict our attention to graphs with minimum degree at least one.
A folklore conjecture in graph theory asserts that in this case there exists a constant $c>0$ such that every such graph has an odd subgraph of order at least $c|V(G)|$.
Following some partial results by Caro \cite{caro1994induced} and by Scott \cite{scott1992large,scott2001odd}, this conjecture was recently proved in \cite{FK} with $c=1/10000$.

It is a challenging problem to find a linear lower bound for other values of $r,q$ (perhaps imposing some necessary conditions on $G$). This suggests studying the asymptotic behavior $f(G,r,q)/|V(G)|$ when $G$ is a random graph.
Recall that for $p\in [0,1]$, $G_{n,p}$ is the random variable that outputs a graph on $n$ vertices, where each potential (unordered) pair of vertices is included as an edge with probability $p$ independently. In \cite{scott1992large}, Scott showed that with high probability $f(G_{n, 1/2}, 1, 2) \approx cn$ where $c\approx 0.7729$ and asked for extensions to other values of $r,q$. 
We generalize Scott's result in the random setting as follows:

\begin{theorem}\label{thm: one part}
    Let $q \geq 2$ and let $0\leq r<q$ be an integer.
    If $n$ is a large integer and $k(n,q)>0$ is the greatest integer such that $\binom{n}{k}q^{-k}\geq 1$, then with high probability, $|f(G_{n, 1/2}, r, q) - k| = O(\log^{10}n)$.
\end{theorem}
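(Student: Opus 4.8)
The plan is to prove the two bounds $f(G_{n,1/2},r,q)\le k+O(\log^{10}n)$ and $f(G_{n,1/2},r,q)\ge k-O(\log^{10}n)$ separately, the first by a union bound and the second by a second moment argument over the number of ``good'' sets; throughout, call $S\subseteq V$ \emph{good} if every vertex of $G[S]$ has degree $\equiv r\pmod q$. The quantitative input used in both parts is that for $|S|=m$ one has $\Pr[S\text{ good}]=\Theta_q(q^{-m})$, unless this is ruled out by the parity of the handshake identity (the only such case is $q$ even and $mr$ odd, where $\Pr[S\text{ good}]=0$). The upper bound on $\Pr[S\text{ good}]$ comes from revealing the edges of $G[S]$ one vertex at a time in an order $v_1,\dots,v_m$: the degree of $v_i$ in $G[S]$ is a quantity determined by previously revealed edges plus an independent $\Bin(m-i,1/2)$, and since the roots-of-unity filter gives $\Pr[\Bin(N,1/2)\equiv a\pmod q]=\tfrac1q+O(q\cos^{N}(\pi/q))$, treating the last $O_q(1)$ vertices crudely yields $\Pr[S\text{ good}]\le C_qq^{-m}$; the matching lower bound follows by exposing all but the last $\Theta(\log m)$ vertices, observing that the exposed vertices meet their residue constraints with probability $\tfrac1q(1-o(1))$ each, and that a compatible completion on the $\Theta(\log m)$ remaining vertices exists with probability $\Theta_q(q^{-\Theta(\log m)})$ as soon as the parity of the handshake identity allows it. Note that $\Pr[S\text{ good}]$ depends only on $m$ since the law of $G_{n,1/2}$ is invariant under permutations of $V$.

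For the upper bound, take $m=k+t$ with $t=\log^{10}n$; then $\Pr[f\ge m]\le\binom nm C_qq^{-m}$ by the union bound. Since $k$ is the largest integer with $\binom nkq^{-k}\ge1$, the consecutive ratio $\binom{n}{m+1}q^{-(m+1)}/\binom nmq^{-m}=\tfrac{n-m}{(m+1)q}$ is $<1$ at $m=k$ and decreasing afterwards, and in fact bounded above by a constant $\rho=\rho(q)<1$ for all $m\ge k$ once $n$ is large (here $k/n$ tends to the positive root $c_q\in(0,1)$ of $H(c)=c\log_2 q$, $H$ the binary entropy, and one checks $\tfrac{1-c_q}{c_qq}<1$). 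Hence $\binom nmq^{-m}\le O(\rho^{\,t})$ and $\Pr[f\ge k+t]=o(1)$.

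For the lower bound, fix $m=k-C\log^{10}n$ for a suitable constant $C$ (shifting $m$ by $O(1)$ if necessary to avoid the parity obstruction) and let $X$ count the good $m$-subsets of $V$. By the same ratio computation $\E X=\binom nm\Theta_q(q^{-m})=\Theta_q(\rho^{-C\log^{10}n})\to\infty$, so by Chebyshev it suffices to prove $\E[X^2]=(1+o(1))(\E X)^2$. Writing $\E[X^2]=\sum_{j=0}^m N_jP_j$, where $N_j=\binom nm\binom mj\binom{n-m}{m-j}$ is the number of ordered pairs $(S,S')$ with $|S\cap S'|=j$ and $P_j=\Pr[S,S'\text{ good}]$ for such a pair (well defined by symmetry), the main point is to show $P_j=(1+o(1))\Pr[S\text{ good}]\Pr[S'\text{ good}]$ when $m-j\ge C'\log n$. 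For this, note that the event ``$G[S']$ good'' depends only on the edges inside $S'$; split these into the edges inside $S\cap S'$ (which feel the conditioning on ``$S$ good'') and the rest (inside $S'\setminus S$ or joining $S\cap S'$ to $S'\setminus S$, which are independent of ``$S$ good''). Revealing the latter first, ``$G[S']$ good'' factors into ``each $u\in S\cap S'$ has $\deg_{G[S']}(u)\equiv r$'' and ``each vertex of $S'\setminus S$ has degree $\equiv r$''; the $u$-th constraint asks that $|N(u)\cap(S'\setminus S)|\sim\Bin(m-j,1/2)$ hit a residue determined by the edges inside $S\cap S'$, but when $m-j\ge C'\log n$ this binomial is within $n^{-\Omega(1)}$ of uniform mod $q$, so that constraint holds with probability $\tfrac1q(1+o(1))$ \emph{regardless} of the edges inside $S\cap S'$. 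Consequently $\Pr[G[S']\text{ good}]$ is essentially unchanged by conditioning on ``$S$ good'', giving the claimed factorization (uniformly in $j$ over this range); summing $N_jP_j$ over $0\le j\le m-C'\log n$ and using $\sum_jN_j=\binom nm^2$ reconstitutes $(1+o(1))(\E X)^2$.

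The main obstacle is the band of large overlaps $m-j<C'\log n$, where the above argument fails: $\Bin(t,1/2)$ with $t=m-j$ small is far from uniform mod $q$, and for $q\ge3$ one finds $P_j$ can exceed $\Pr[S\text{ good}]\Pr[S'\text{ good}]$ by a factor as large as $(q/2)^{m}$. The resolution is that such pairs are extremely rare. For $1\le m-j<C'\log n$, revealing $G[S]$ and then the $j(m-j)$ edges between $S\cap S'$ and $S'\setminus S$, each of the $j\ge m-C'\log n$ vertices of $S\cap S'$ must see a prescribed residue of an independent $\Bin(m-j,1/2)$, and since $\Pr[\Bin(N,1/2)\equiv a\pmod q]\le\tfrac12$ for every $N\ge1$ we get $P_j\le\Pr[S\text{ good}]\cdot2^{-j}\le\Theta_q(q^{-m})\,2^{-m}n^{C'}$; as there are only $\sum_{1\le t<C'\log n}\binom nm\binom mt\binom{n-m}{t}\le\binom nm\,n^{O(\log n)}$ such pairs, their total contribution to $\E[X^2]$ is at most $\binom nm\,\Theta_q(q^{-m})\,2^{-m}\,n^{O(\log n)}$, which since $m\le k$ forces $\binom nm\ge q^m$ and hence is $o((\E X)^2)$ (the ratio to $(\E X)^2=\Theta_q(\binom nm^2q^{-2m})$ being at most $n^{O(\log n)}2^{-m}=2^{O(\log^2n)-\Theta(n)}$); the diagonal $j=m$ contributes exactly $\E X=o((\E X)^2)$. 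Combining, $\E[X^2]=(1+o(1))(\E X)^2$, so $X\ge1$ w.h.p.\ and $f\ge m=k-C\log^{10}n$ w.h.p. The remaining work is bookkeeping — making the $(1+o(1))$'s uniform over $j$, handling the $O_q(1)$ ``tail'' vertices in each exposure, and checking the handshake-parity conditions — and this is where the (unoptimized) polylogarithmic window between $f$ and $k$ in the statement originates.
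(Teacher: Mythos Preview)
Your approach matches the paper's: union bound for the upper tail, second moment for the lower tail, with the covariance sum split by overlap $j=|S\cap S'|$ into a large-overlap band (handled by counting) and a small-overlap regime where one shows $P_j=(1+o(1))\Pr[S\text{ good}]^2$. Your large-overlap bound $P_j\le\Pr[S\text{ good}]\cdot 2^{-j}$ is sharper than the paper's trivial $\E[X_IX_J]\le\E[X_I]$, though either suffices; the thresholds ($C'\log n$ versus $\log^2 n$) and the inline roots-of-unity filter versus the paper's packaged Fourier lemmas are cosmetic differences.

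The small-overlap factorization, however, is not closed as written. You observe that conditioning on ``$S$ good'' only shifts the target residues for the $u$-constraints ($u\in S\cap S'$), and that each such constraint is satisfied with probability $\tfrac1q(1+o(1))$ regardless of its target. From this you conclude $\Pr[S'\text{ good}]$ is essentially unchanged by the conditioning. But the $u$-constraints and the $v$-constraints ($v\in S'\setminus S$) are coupled through the crossing matrix $M$: writing $L$ for the adjacency matrix of $G[S'\setminus S]$, one has
\[
\Pr[S'\text{ good}\mid\omega]=\E_M\bigl[\mathbf 1\{M\boldsymbol 1\equiv\mathbf r(\omega)\}\cdot\Pr_L[L\boldsymbol 1\equiv r\boldsymbol 1-M^{T}\boldsymbol 1]\bigr],
\]
and showing this is insensitive to $\mathbf r(\omega)$ requires the \emph{symmetric} uniformity $\Pr_L[L\boldsymbol 1\equiv\mathbf v]=q^{-(m-j)}(1+o(1))$ uniformly over (compatible) $\mathbf v$. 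Your vertex-by-vertex exposure yields the upper half of this, but not the matching lower bound: once only $O_q(1)$ vertices remain there is no fresh binomial to appeal to, and one needs a global argument (existence of graphs realizing every compatible degree vector mod $q$, or the Fourier computation the paper carries out). The paper supplies exactly this as Lemma~\ref{lem: symmetric}; with it in hand, your argument and the paper's coincide. This step is more than the ``bookkeeping'' you defer at the end --- it is the one genuinely non-row-wise ingredient in the proof.
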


In fact, with small modifications to the proof of the above theorem, we can further strengthen it to degree sequences modulo $q$. That is, let $0\leq \alpha_0,\ldots,\alpha_{q-1}\leq 1$ be such that $\alpha_0+\cdots+\alpha_{q-1}=1$, and let $\boldsymbol{\alpha}:=(\alpha_0,\ldots,\alpha_{q-1})$.
We define $f(G,\boldsymbol{\alpha},q)$ to be the largest $k$ for which $G$ contains an induced subgraph on $k$ vertices where, for each $0\leq i\leq q-1$, the number of vertices of degree $i \pmod q$ is either $\lfloor \alpha_ik\rfloor$ or  $\lceil \alpha_i k\rceil$. 
Notice that if $\alpha_r=1$ and $\alpha_i=0$ for all $i\neq r$, then $f(G,\boldsymbol{\alpha},q)$ is just $f(G,r,q)$.


\begin{theorem}\label{thm: one part distrubution}
    Let $q\geq 2$ and let $\boldsymbol{\alpha} = (\alpha_0, \ldots, \alpha_{q-1})\in [0,1]^{q}$ be such that $\alpha_0 + \cdots + \alpha_{q-1} = 1$.
    If $n$ is a large integer and $k(n, q, \alpha)>0$ is the greatest integer such that
    \[
    \binom{n}{k}\binom{k}{k_0, \ldots, k_{q-1}}q^{-k}\geq 1
    \]
    for all choices of $k_i\in \{\lceil \alpha_ik\rceil, \lfloor \alpha_ik\rfloor\}$ that satisfy $k_0 + \cdots + k_{q-1} = k$,
    then with high probability,
    $|f(G_{n, 1/2},\boldsymbol{\alpha},q)-k|=o(n)$.
\end{theorem}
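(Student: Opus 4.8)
The plan is to prove matching upper and lower bounds on $f(G_{n,1/2},\boldsymbol{\alpha},q)$ by the first- and second-moment methods (setting $\alpha_r=1$ recovers Theorem~\ref{thm: one part}). Call $\vec k=(k_0,\dots,k_{q-1})$ an \emph{$m$-admissible} profile if $k_i\in\{\lfloor\alpha_im\rfloor,\lceil\alpha_im\rceil\}$ and $\sum_ik_i=m$; there are at most $2^q=O(1)$ of these, and an induced $m$-set is ``good'' if the multiset of its degrees mod $q$ realizes some $m$-admissible profile. The key input is an estimate for the induced random graph on a fixed $m$-set $S$, which is distributed as $G_{m,1/2}$. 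Writing indicators as sums of $q$-th roots of unity and using $\sum_v t_v\deg_S(v)=\sum_{\{u,v\}\in E}(t_u+t_v)$ together with edge-independence,
\[
\Pr\bigl[\deg_S(v)\equiv d_v\ (v\in S)\bigr]=\frac{1}{q^m}\sum_{t\in(\Z/q\Z)^S}\Bigl(\prod_v\omega^{-t_vd_v}\Bigr)\prod_{\{u,v\}\in\binom{S}{2}}\frac{1+\omega^{t_u+t_v}}{2},\qquad\omega=e^{2\pi i/q}.
\]
Here $t=\mathbf 0$ contributes $q^{-m}$, the vector $t=(q/2,\dots,q/2)$ contributes $q^{-m}(-1)^{\sum_vd_v}$ when $q$ is even, and every other $t$ contributes a term of modulus $\prod_{\{u,v\}:\,t_u+t_v\not\equiv 0}|\cos(\pi(t_u+t_v)/q)|\le\cos(\pi/q)^{\Omega(m)}$: the pairs $\{u,v\}$ with $t_u+t_v\equiv 0$ span a triangle-free graph on the support of $t$, which forces $\Omega(m)$ defective pairs whenever $t$ has linearly many coordinates outside $\{0,q/2\}$, while pairs joining a $0$-coordinate to a nonzero one dispose of the remaining $t$. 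Summing, $\Pr[\deg_S\equiv d]=q^{-m}\bigl(1+(-1)^{\sum_vd_v}\mathbf 1[2\mid q]+o(1)\bigr)$ uniformly in $d$; by exchangeability of $G_{m,1/2}$ this gives $\Pr[S\text{ realizes }\vec k]=\binom{m}{k_0,\dots,k_{q-1}}q^{-m}\bigl(1+(-1)^{\sum_iik_i}\mathbf 1[2\mid q]+o(1)\bigr)$, which is $\Theta\bigl(\binom{m}{k_0,\dots,k_{q-1}}q^{-m}\bigr)$ except when $q$ is even and $\sum_iik_i$ is odd — a genuine obstruction, since $\sum_v\deg_S(v)=2e(S)$ is even.

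For the upper bound, let $N_m$ count the good induced $m$-sets. The estimate above gives $\E[N_m]\le\mathrm{poly}(n)\cdot\binom{n}{m}\binom{m}{\vec k}q^{-m}$ for a profile $\vec k$ occurring in the definition of $k=k(n,q,\boldsymbol{\alpha})$ (all $m$-admissible profiles differ coordinatewise by $O(1)$, so their multinomial coefficients agree up to polynomial factors). Up to polynomial factors this is $\binom{n}{m}(q_{\boldsymbol{\alpha}}/q)^m$ with $q_{\boldsymbol{\alpha}}=\prod_i\alpha_i^{-\alpha_i}\le q$, a single-peaked function of $m$ whose interior maximum is exponentially large; hence the value-one crossing point $k$ lies a linear distance past the peak, in the range where consecutive terms contract by a factor bounded away from $1$. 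Therefore $\sum_{m\ge k+\varepsilon n}\E[N_m]\to 0$ for every fixed $\varepsilon>0$, and Markov's inequality gives $f(G_{n,1/2},\boldsymbol{\alpha},q)\le k+o(n)$ w.h.p.

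For the lower bound I would work at $m=k-\varepsilon n$ (where, for all but $O(1)$ offending values of $m$, some admissible profile avoids the parity obstruction, so $\E[N_m]$ is exponentially large) and bound $\E[N_m^2]=\sum_{S,S'}\Pr[S,S'\text{ both good}]$ by splitting on the overlap deficit $j=|S\setminus S'|$. For $j$ of order larger than $q^2$, conditioning on the edges inside $D=S\cap S'$ reduces the event to two systems of $\sim m$ congruences mod $q$ in the $D$-to-$(S\setminus S')$ and $D$-to-$(S'\setminus S)$ edge indicators — coefficient matrices being incidence matrices of complete bipartite graphs, of corank exactly one — and, together with a local-limit bound that $\mathrm{Bin}(j,1/2)\bmod q$ is uniform up to $e^{-\Omega(j/q^2)}$, this should give $\Pr[S,S'\text{ both good}]=(1+o(1))q^{-2m}\binom{m}{\vec k}^2$ uniformly; by Vandermonde's identity $\sum_j\binom{m}{j}\binom{n-m}{j}=\binom{n}{m}$ that regime contributes $(1+o(1))\E[N_m]^2$. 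For the $O(q^2)$ small values of $j$, the crude bounds $\Pr[S,S'\text{ both good}]\le q^{-m}$ and ``at most $\binom{n}{m}\,\mathrm{poly}(n)$ such pairs'' make the contribution $\le\mathrm{poly}(n)\cdot\E[N_m]=o(\E[N_m]^2)$. Hence $\Var(N_m)=o(\E[N_m]^2)$, Chebyshev gives $N_m>0$ w.h.p., and $f\ge k-o(n)$. \textbf{The asymptotic-independence estimate for heavily overlapping vertex sets is the main obstacle} — it is the one step where the combinatorial structure of $G_{n,1/2}$, rather than generic concentration, must be exploited — and together with the parity obstruction and the fact that the admissible profiles (hence the first-moment function) shift with $m$, it is why I would be content with $o(n)$ precision here rather than the sharper $O(\log^{10}n)$ that the same argument, pushed to $m=k-\mathrm{polylog}(n)$, yields in the single-residue setting of Theorem~\ref{thm: one part}.
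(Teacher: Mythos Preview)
Your approach is essentially the paper's: character-sum estimates for $\Pr[\deg_S\equiv d]$ (your derivation is Lemma~\ref{lem: symmetric}), a first-moment upper bound, and a second-moment lower bound splitting the covariance by the overlap deficit $j$. The paper conditions on all of $G[I]$ and then decomposes $G[J]$ into the overlap block, a rectangular block handled by Corollary~\ref{lem: non-symmetric}, and a symmetric block handled by Lemma~\ref{lem: symmetric}; your conditioning on the overlap $D$ together with the ``corank-one bipartite system'' is the closely related Lemma~\ref{lem: both ways}.

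There is, however, one concrete error in the split: the cutoff $j>O(q^2)$ is too low for the asymptotic-independence claim. For each of the $m-j\sim n$ vertices $v\in D$, the constraint on its degree in $S$ becomes $\mathrm{Bin}(j,1/2)\equiv c_v\pmod q$, and your local-limit bound gives error $e^{-\Omega(j/q^2)}$ \emph{per row}; these errors accumulate multiplicatively over the independent rows of the bipartite block, so the estimate $\Pr[S,S'\text{ both good}]=(1+o(1))q^{-2m}\binom{m}{\vec k}^2$ requires $(m-j)\,e^{-\Omega(j/q^2)}=o(1)$, i.e.\ $j=\omega(\log n)$, not $j$ merely larger than a constant. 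This is exactly the hypothesis $s=\omega(\log t)$ in Lemma~\ref{lem: both ways}. The fix is the paper's: take the cutoff at $j\ge\log^2 n$. The small-$j$ regime then contains $\binom{n}{m}\cdot n^{O(\log^2 n)}$ pairs rather than $\binom{n}{m}\cdot\mathrm{poly}(n)$, but since you work at $m=k-\varepsilon n$ where $\E[N_m]=e^{\Theta(n)}$, your crude bound still gives contribution $n^{O(\log^2 n)}\cdot\E[N_m]=o(\E[N_m]^2)$, and the argument goes through exactly as in the paper.
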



Even though Theorem \ref{thm: one part distrubution} implies Theorem \ref{thm: one part}, for the convenience of the reader we will prove the slightly simpler Theorem  \ref{thm: one part} as well. 

Now we address the second guarantee in Gallai's result -- the existence of a partition into induced subgraphs under some degree constraints.
Given a graph $G$, we say that a partition $V(G) = V_1 \cup \cdots \cup V_k$ is an $(r,q)$-{\bf partition} if all degrees in $G[V_i]$ are congruent to $r\pmod{q}$ for every $1\le i\le k$.
Now define
\[
p(G, r, q) = \min\{k: G\text{ has an $(r,q)$-partition with $k$ parts}\}.
\]
Under this notation, Gallai proved that $p(G, 0, 2) \leq  2$ and in \cite{scott2001odd}, Scott showed that $p(G, 1, 2)$ is finite if and only if every connected component of $G$ has an even order.
In the same paper, Scott also treated random graphs and showed that for $n$ even, with high probability $p(G_{n, 1/2}, 1, n)\leq 3$. He conjectured that for every $r,q$ there exists a constant $\tilde c_q$ so that with high probability $p(G_{n, 1/2}, r, q)\leq \tilde c_q$.
We resolve this conjecture in the affirmative with the following theorem.
\begin{theorem}\label{thm: packing}
    Fix an integer $q\geq 2$ and let $0\leq r<q$ be an integer.
     Then with high probability
     we have $p(G_{n, 1/2},r,q)\leq q+1$. 
\end{theorem}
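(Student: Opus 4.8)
The plan is to prove the sharper statement from the abstract — that with high probability $V(G_{n,1/2})$ can be partitioned into $q+1$ sets $V_1,\dots,V_{q+1}$ of nearly equal size, each inducing a subgraph all of whose degrees are $\equiv r\pmod q$ — which at once gives $p(G_{n,1/2},r,q)\le q+1$. The crucial first point is that for \emph{any} sizes $m_1,\dots,m_{q+1}$ summing to $n$, the $q+1$ events ``$G[V_i]$ has all degrees $\equiv r\pmod q$'' depend on pairwise disjoint sets of edges (those internal to the parts) and are therefore mutually independent; moreover, by the estimates underlying the proof of Theorem~\ref{thm: one part}, each holds with probability $(C_q+o(1))q^{-m_i}$ for an absolute constant $C_q\in\{1,2\}$ — the one exception being that when $q$ is even and $r$ odd this probability is $0$ unless $m_i$ is even (and then it is $(2+o(1))q^{-m_i}$), which forces one to take all $m_i$ even. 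Summing over the $\binom{n}{m_1,\dots,m_{q+1}}$ partitions of a given profile and then over the (nearly balanced) profiles, the expected number of valid $(q+1)$-partitions into nearly equal parts is, up to polynomial factors, $\sum_{\vec m}\binom{n}{m_1,\dots,m_{q+1}}q^{-n}=\bigl((q+1)/q\bigr)^{n}$ — \emph{exponentially} large. It is worth stressing that this exponential margin appears only when all $q+1$ parts are used simultaneously: the same computation for $q$ parts gives merely a bounded constant, which does not by itself guarantee existence, and peeling valid subgraphs off one at a time is of no help, since the complement of a valid induced subgraph is again essentially a random graph on linearly many vertices.

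So the entire task is to promote ``the expectation is exponentially large'' to ``the count is positive with high probability''. The cleanest route is a second moment argument for $X:=\#\{\text{valid }(q+1)\text{-partitions into nearly equal parts}\}$: conditioned on a fixed valid partition $P$, the valid partitions $P'$ that agree with $P$ outside an $o(n)$-set number only $e^{o(n)}$ and hence contribute at most $\E[X]\cdot e^{o(n)}\ll\E[X]^2$ to $\E[X^2]$ (using that $\E[X]$ is exponentially large), whereas the partitions far from $P$ contribute, after an overlap-pattern bookkeeping, the main term $(1+o(1))\E[X]^2$; then $X>0$ with high probability. Alternatively one may run a conditioning-and-correction scheme in the spirit of Theorems~\ref{thm: one part} and \ref{thm: one part distrubution}: fix blocks $B_1,\dots,B_{q+1}$ of sizes $\approx n/(q+1)$, reserve a small wildcard set $W_i\subseteq B_i$ in each, reveal all edges not incident to $W:=\bigcup_iW_i$, and then use the edges from $W$ together with the structure inside $W$ — through the Littlewood--Offord--type anticoncentration over $\Z_q$ that such arguments rely on — to push the residue vector of each $G[V_i]$ to the all-$r$ vector, where $V_i=C_i\cup W_i'$ for a suitable $W_i'\subseteq W_i$. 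Here the independence of the parts decouples the $q+1$ correction problems, and a union bound over the exponentially rare ``bad'' configurations finishes it, the exponential margin from the first moment being exactly what makes that union bound affordable.

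The step I expect to be the real obstacle is carrying out the correction (respectively the second-moment bookkeeping) while simultaneously controlling (a) the nearly-equal sizes and (b) the divisibility constraints flagged above, and in particular while correcting all $q+1$ parts at once. Concretely, in the correction scheme each wildcard moved into a part shifts that part's size, so the wildcard sets must be balanced against compensating deletions (or one allows the $|V_i|$ to drift by $o(n)$ and argues this is still ``nearly equal''); and the degrees of the wildcard vertices themselves must be corrected, which, as in Theorem~\ref{thm: one part}, is done by splitting each $W_i$ into a sub-block fixing the core and a smaller sub-block fixing the first. Propagating all of this through the $q+1$ parts simultaneously, uniformly over the exponentially many admissible size profiles and subject to the parity restriction on the $m_i$, is the one genuinely new ingredient beyond Theorems~\ref{thm: one part} and~\ref{thm: one part distrubution}.
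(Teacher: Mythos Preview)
Your high-level plan --- second moment on the number $X$ of balanced $(r,q)$-partitions into $t=q+1$ parts --- is exactly the paper's approach, and your first-moment computation matches. The gap is in the variance step, which you summarize as ``overlap-pattern bookkeeping'' for pairs that are ``far'' from one another. Two concrete points.

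First, your near/far dichotomy (agree outside an $o(n)$-set versus not) is not the decomposition the paper uses, and it is not fine enough. Consider $(\mathcal U,\mathcal V)$ with $U_1=V_1$ exactly but the remaining $t-1$ parts completely reshuffled: this pair is ``far'' by your definition, yet the single shared part forces $\Pr[\text{both valid}]\approx q^{-(2t-1)n/t}$, so $\Cov(X_{\mathcal U},X_{\mathcal V})\approx q^{n/t}/q^{2n}$, enormously larger than $q^{-2n}$. Such pairs must be isolated and counted separately. The paper's decomposition is instead into \emph{typical} pairs (every $|U_i\cap V_j|\le n/3t$) and \emph{atypical} pairs; the atypical pairs are then further split into those with all set differences $|U_i\setminus V_j|,|V_j\setminus U_i|\ge\log^2 n$ (exponentially rare, Claim~\ref{atyp size}) and, for each $1\le s\le t$, those with exactly $s$ parts nearly coinciding (Claims~\ref{claim: B2}, \ref{claim: bads}). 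This stratification by the number of nearly-common parts is the missing bookkeeping.

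Second, for typical pairs you still need to show $\Cov(X_{\mathcal U},X_{\mathcal V})=o(1)/q^{2n}$, and this is the main technical content. The paper does it by a three-stage edge-revealing argument (Claim~\ref{typical cov}): first reveal all $G[U_i\cap V_j]$; then the rest of each $G[U_i\setminus V_{j_i}]$ for a chosen $j_i$; then the crossing edges between $U_i\cap V_{j_i}$ and $U_i\setminus V_{j_i}$. The last step is controlled by Lemma~\ref{lem: both ways}, which gives the joint distribution of $(\boldsymbol 1^TM,\,M\boldsymbol 1)$ for a rectangular Bernoulli matrix $M$ and is the genuinely new analytic input beyond what Theorem~\ref{thm: one part} uses. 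You have not identified this tool, and without it (or an equivalent) the ``far'' covariance estimate does not go through.

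Finally, your alternative ``conditioning-and-correction'' scheme is not how the paper proceeds, and the phrase ``in the spirit of Theorems~\ref{thm: one part} and~\ref{thm: one part distrubution}'' is off: those theorems are also proved by second moment, not by any wildcard-correction argument.
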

In light of the recent work of Balister, Powierski, Scott and Tan \cite{scott2021}, this bound is the best possible.
Specifically, they show that if $X_n$ is the number of distinct partitions of $V(G_{n, 1/2})$ into $V_1, \ldots, V_q$, where all degrees in $G[V_i]$ are $r_i\pmod q$, then $X_n$ is asymptotically Poisson in distribution for $q >2 $.
For $q=2$, they obtain a more complicated, but still explicit distribution. In both cases $\mathbb P[X_n=0]$ is bounded away from zero.

We will use the second moment method in order to prove our theorems. The main difference between our approach and Scott's original approach is that when working with modulus $q>2$, the arguments are more involved and requite the aid of discrete Fourier analysis. Even though we do not believe that all of the Fourier-type lemmas appearing in this paper are new, we included full proofs as they seem of an independent interest (for example, for an application of these lemmas to the singularity problem of random symmetric Bernoulli matrices, the reader is referred to \cite{ferber2020singularity}).

\paragraph{Notation}

Our graph theoretical notation is quite standard. In particular, for a graph $G=(V,E)$ and $U\subseteq V$, we use $G[U]$ to denote the subgraph induced by the set $U$. 

As for linear algebraic notation, vectors will appear in boldface and their coordinates will not, e.g. $\boldsymbol{v} = (v_1, \ldots, v_m)$.
We write $\Z_q$ for the set of integers modulo  $q\geq 2$, and for any $d\geq 1$, $\boldsymbol{1}_d$ denotes the vector $(1, \ldots, 1)$ in $\Z_q^d$.
For any fixed integer $q$, we define the function $e_q:\Z_q\to \C$ by $e_q(x) = e^{2\pi i x/q}$.
Finally, $\Bern(p)$ denotes a Bernoulli random variable with success probability $p$.

\section{Auxiliary results}

In this section we state and prove some auxiliary results, to be used in the proofs of our main theorems.  

\subsection{Chernoff's bounds}

We will make use of the following tail estimates for binomial random variables.

\begin{lemma}[Chernoff's inequality (see \cite{as2015})]
    \label{Chernoff}
    Let $X \sim \operatorname{Bin} (n, p)$ and let
    $\mu = \mathbb{E}[X]$. Then
    \begin{itemize}
        \item $\mathbb P[X < (1 - a)\mu] < e^{-a^2\mu/2}$ for every $a > 0$;
        \item $\mathbb P[X > (1 + a)\mu] < e^{-a^2\mu/3}$ for every $0 < a < 3/2$.
    \end{itemize}
\end{lemma}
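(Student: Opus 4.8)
The plan is to use the classical exponential-moment (Chernoff) method. Write $X = X_1 + \cdots + X_n$ where the $X_i$ are independent $\Bern(p)$ random variables, so that $\mu = \mathbb{E}[X] = np$, and recall that for every $t \in \R$,
\[
\mathbb{E}[e^{tX}] = \prod_{i=1}^{n}\mathbb{E}[e^{tX_i}] = (1 - p + pe^t)^n = \bigl(1 + p(e^t - 1)\bigr)^n \le e^{\mu(e^t - 1)},
\]
using the inequality $1 + x \le e^x$. This single bound drives both tail estimates.

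For the upper tail, fix $t > 0$ and apply Markov's inequality to $e^{tX}$:
\[
\mathbb{P}[X > (1+a)\mu] \le e^{-t(1+a)\mu}\,\mathbb{E}[e^{tX}] \le \exp\bigl(\mu(e^t - 1 - t(1+a))\bigr).
\]
Choosing the optimal $t = \ln(1+a) > 0$ (legitimate since $a > 0$) gives $\mathbb{P}[X > (1+a)\mu] \le \exp\bigl(\mu(a - (1+a)\ln(1+a))\bigr)$, so it remains to verify the scalar inequality $(1+a)\ln(1+a) - a \ge a^2/3$ for $0 < a < 3/2$. I would prove this by setting $g(a) = (1+a)\ln(1+a) - a - a^2/3$, noting $g(0) = g'(0) = 0$, computing $g''(a) = \tfrac{1}{1+a} - \tfrac23$ (positive for $a < 1/2$, negative afterwards), and checking that $g$ stays nonnegative all the way to the endpoint, where $g(3/2) = \tfrac52\ln\tfrac52 - \tfrac94 > 0$; this pins down exactly why the hypothesis $a < 3/2$ appears.

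For the lower tail, fix $t > 0$ and apply Markov's inequality to $e^{-tX}$:
\[
\mathbb{P}[X < (1-a)\mu] \le e^{t(1-a)\mu}\,\mathbb{E}[e^{-tX}] \le \exp\bigl(\mu(e^{-t} - 1 + t(1-a))\bigr).
\]
Optimizing with $t = -\ln(1-a) > 0$ (valid for $0 < a < 1$) yields $\mathbb{P}[X < (1-a)\mu] \le \exp\bigl(\mu(-a - (1-a)\ln(1-a))\bigr)$, and it suffices to show $(1-a)\ln(1-a) + a \ge a^2/2$ for $0 < a < 1$. Here the auxiliary function $h(a) = (1-a)\ln(1-a) + a - a^2/2$ satisfies $h(0) = h'(0) = 0$ and $h''(a) = \tfrac{a}{1-a} \ge 0$, hence $h \ge 0$ on $[0,1)$, which covers the stated range.

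The only genuine obstacle is the scalar calculus in the upper-tail case: unlike the lower tail, $g$ is not convex on the whole interval and $g'$ changes sign strictly before $a = 3/2$, so one cannot simply invoke $g(0) = g'(0) = 0$ together with convexity; instead one must track the monotonicity of $g$ and confirm that its minimum over $(0, 3/2)$ is attained at an endpoint with nonnegative value. Everything else — the moment-generating-function computation, Markov's inequality, and the choice of the optimal $t$ — is entirely routine.
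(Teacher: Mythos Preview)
The paper does not prove this lemma at all; it simply quotes it with a reference to \cite{as2015}. Your exponential-moment argument is the standard proof and is essentially correct, so there is nothing to compare against.

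One small gap to close: the lemma asserts the lower-tail bound for \emph{every} $a>0$, but your optimization $t=-\ln(1-a)$ and the auxiliary inequality $(1-a)\ln(1-a)+a\ge a^2/2$ are only set up for $0<a<1$; the claim that $[0,1)$ ``covers the stated range'' is not accurate. The missing range $a\ge 1$ is trivial --- then $(1-a)\mu\le 0$ while $X\ge 0$ almost surely, so $\mathbb{P}[X<(1-a)\mu]=0<e^{-a^2\mu/2}$ --- but you should say so explicitly. A second cosmetic point: the lemma is stated with strict inequalities, whereas Markov's inequality and your scalar estimates yield $\le$; the strictness can be recovered from the strict inequality $1+x<e^x$ for $x\neq 0$ in the moment-generating-function bound (and from the trivial case when $p=0$), but it is worth a word.
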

\begin{remark}\label{CheHyper} The conclusions of Chernoff's inequality remain the same
    when $X$ has the hypergeometric distribution (see \cite{jlr2011}, Theorem~2.10).
\end{remark}





\subsection{Key technical lemmas}
In each of the following lemmas, $q$ denotes a fixed positive integer at least $2$, all equivalences are modulo $q$, and we write $\boldsymbol u \equiv \boldsymbol v \pmod q$ for vectors $\boldsymbol u$ and $\boldsymbol v$ when $u_i\equiv v_i\pmod q$ for all $i$. We believe that not all (if any) of the following lemmas are new, but since we could not find a convenient reference in the literature, we include complete proofs for all of them. We state the lemmas first, deferring their proofs to the next subsection.

The first lemma states that the distribution of the sum of many $\Bern(1/2)$ random variables modulo $q$ is asymptotically uniform.

\begin{lemma}\label{lem: one row}
    Let $n$ be a positive integer and let $\xi_1, \ldots, \xi_n$ be iid $\Bern(1/2)$ random variables.
    Then for a fixed $a\in \Z_q$,
    \[
    \mathbb P[\xi_1 + \cdots + \xi_n \equiv a] = \frac{1}{q}\left(1 + e^{-\Omega(n)}\right).
    \]
\end{lemma}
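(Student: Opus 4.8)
The plan is to use the discrete Fourier expansion of the indicator of the event $\{\xi_1+\cdots+\xi_n\equiv a\}$ over $\Z_q$. Writing
\[
\mathbb P[\xi_1+\cdots+\xi_n\equiv a] = \frac{1}{q}\sum_{t=0}^{q-1} e_q(-ta)\,\mathbb E\!\left[e_q\!\left(t\sum_{j=1}^n \xi_j\right)\right],
\]
I would use independence to factor the expectation as $\prod_{j=1}^n \mathbb E[e_q(t\xi_j)]$, and since each $\xi_j$ is $\Bern(1/2)$, the single-coordinate character sum is $\mathbb E[e_q(t\xi_j)] = \tfrac12(1 + e_q(t)) = \tfrac12(1 + e^{2\pi i t/q})$.

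The $t=0$ term contributes exactly $1/q$, which is the main term. For each $t\in\{1,\ldots,q-1\}$ the factor is $\tfrac12(1+e^{2\pi i t/q})$, whose modulus is $|\cos(\pi t/q)| < 1$ strictly, since $t/q\in(0,1)$ keeps $\pi t/q$ away from $0$ and $\pi$. Hence $\left|\mathbb E[e_q(t\sum_j\xi_j)]\right| = |\cos(\pi t/q)|^n \le (\cos(\pi/q))^n$, and summing the $q-1$ error terms gives a total contribution bounded in absolute value by $\tfrac{1}{q}(q-1)(\cos(\pi/q))^n = e^{-\Omega(n)}$, where the implied constant depends only on the fixed $q$. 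Rearranging yields $\mathbb P[\xi_1+\cdots+\xi_n\equiv a] = \tfrac1q(1 + e^{-\Omega(n)})$ as claimed; strictly speaking one should note the error could be negative, but $|{\cdot}| = e^{-\Omega(n)}$ still gives the stated form with a signed $e^{-\Omega(n)}$ quantity inside the parentheses.

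There is really no serious obstacle here; the only point requiring a moment's care is verifying that $\max_{1\le t\le q-1}|\tfrac12(1+e^{2\pi i t/q})| < 1$ uniformly, i.e. that this maximum is a constant strictly below $1$ that may depend on $q$ — this is immediate from $|\tfrac12(1+e^{i\theta})| = |\cos(\theta/2)|$ and $\theta = 2\pi t/q \in (0,2\pi)$ avoiding $0$ and $2\pi$, so the worst case is $t=1$ or $t=q-1$, giving $\cos(\pi/q) < 1$. One should also remark that the hypothesis "$q$ fixed" is what turns $(\cos(\pi/q))^n$ into a genuine $e^{-\Omega(n)}$ bound, and record that the same computation shows the distribution is exactly uniform when $q$ divides appropriately-structured sums, but that is not needed.
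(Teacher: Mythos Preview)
Your proposal is correct and follows essentially the same approach as the paper: expand the indicator via discrete Fourier transform over $\Z_q$, factor by independence to get $\left(\tfrac{1+e_q(t)}{2}\right)^n$, isolate the $t=0$ main term, and bound the remaining terms via $|\cos(\pi t/q)|^n$. The only cosmetic difference is that the paper bounds $|\cos(\pi\ell/q)|\le e^{-2/q^2}$ uniformly over $1\le\ell\le q-1$, whereas you use the (sharper) bound $|\cos(\pi t/q)|\le \cos(\pi/q)$; both yield $e^{-\Omega(n)}$ for fixed $q$.
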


As an immediate corollary, we have the following result for random matrices with iid $\Bern(1/2)$ entries.

\begin{corollary}\label{lem: non-symmetric}
    Let $M$ be a random $s\times t$ matrix whose entries are iid $\Bern(1/2)$ random variables.
    Then for any $\boldsymbol{v}\in \Z_q^s$,
    \[
    \mathbb P[M\boldsymbol{1}_t \equiv \boldsymbol{v}] = \frac{1}{q^s} + p_{s,t}(\boldsymbol v),
    \]
    where $p_{s, t}(\boldsymbol v) =  \frac{1}{q^s}\left[\left(1 + e^{-\Omega(t)}\right)^s-1\right]$.
\end{corollary}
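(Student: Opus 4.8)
The plan is to recognize that $M\boldsymbol{1}_t$ is nothing but the vector of row sums of $M$: if we write $\xi_{i,j}$ for the $(i,j)$-entry of $M$, then the $i$-th coordinate of $M\boldsymbol{1}_t$ equals $\xi_{i,1}+\cdots+\xi_{i,t}$. The crucial structural point is that distinct rows of $M$ involve disjoint collections of entries, and the entries are mutually independent; hence the $s$ coordinates of $M\boldsymbol{1}_t$ are mutually independent random variables in $\Z_q$. Consequently, for any fixed $\boldsymbol v=(v_1,\dots,v_s)\in\Z_q^s$,
\[
\mathbb P[M\boldsymbol{1}_t\equiv \boldsymbol v]=\prod_{i=1}^s \mathbb P\Big[\textstyle\sum_{j=1}^t \xi_{i,j}\equiv v_i\Big].
\]

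Next I would apply Lemma \ref{lem: one row} to each of the $s$ factors. Since each row is a sum of $t$ iid $\Bern(1/2)$ variables, the lemma gives $\mathbb P[\sum_{j=1}^t \xi_{i,j}\equiv v_i]=\frac1q(1+\varepsilon_i)$ for each $i$, where $\varepsilon_i$ depends on $v_i$ but satisfies $|\varepsilon_i|\le e^{-\Omega(t)}$ with an implied constant that is uniform over the residue (the bound in Lemma \ref{lem: one row} does not depend on $a$). Multiplying, we obtain
\[
\mathbb P[M\boldsymbol{1}_t\equiv \boldsymbol v]=\frac{1}{q^s}\prod_{i=1}^s(1+\varepsilon_i).
\]

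Finally I would quantify the deviation from the uniform value $q^{-s}$ by the elementary estimate $\bigl|\prod_{i=1}^s(1+\varepsilon_i)-1\bigr|\le \prod_{i=1}^s(1+|\varepsilon_i|)-1\le (1+e^{-\Omega(t)})^s-1$, which yields $\mathbb P[M\boldsymbol 1_t\equiv\boldsymbol v]=q^{-s}+p_{s,t}(\boldsymbol v)$ with $|p_{s,t}(\boldsymbol v)|\le \frac{1}{q^s}\bigl[(1+e^{-\Omega(t)})^s-1\bigr]$, exactly as asserted. I do not anticipate any real obstacle: this is a routine deduction from Lemma \ref{lem: one row} together with independence of the rows. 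The only points requiring a little care are that the error term $e^{-\Omega(t)}$ from Lemma \ref{lem: one row} must be read as a two-sided bound (a single row sum may land on a residue with probability slightly below $1/q$), and that the implied constant in that lemma is uniform in the residue, so that the telescoping product bound above is legitimate.
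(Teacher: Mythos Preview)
Your argument is correct and is precisely the deduction the paper intends: the corollary is stated as an immediate consequence of Lemma~\ref{lem: one row}, and the paper's own remark after the corollary confirms that the expression for $p_{s,t}(\boldsymbol v)$ is to be read as a uniform bound on its magnitude rather than an exact formula, exactly as you observed. There is nothing to add.
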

Note that, while the error $p_{s,t}$ does indeed depend on $\boldsymbol v$ (or else the distribution of $M\boldsymbol 1_t$ would be exactly uniform), our bound on its magnitude is independent of $\boldsymbol v$.
In the case where $s \leq n$ and $\omega(\log n)=t \leq n$, as will be our primary use case, this error is $\frac{1}{q^s}\cdot o(1)$.

The second lemma states that a similar result still holds for \emph{symmetric} Bernoulli random matrices.
If $M$ is the adjacency matrix of $G\sim G(m, 1/2)$, a random symmetric $0/1$ $m\times m$ matrix (with zero diagonal), then the $j$-th entry of $M\boldsymbol{1}_m$ is the degree of the $j$-th vertex.
Observe that if $q$ is even, since the sum of the degrees in a graph is even, we have that $M \boldsymbol{1}_m$ is always some vector $\boldsymbol{v}\in \Z_q^m$ for which $\sum_i v_i$ is even modulo $q$.
Since there are exactly $q^m/2$ such vectors, we obtain slightly different distributions for $M\boldsymbol{1}_m \pmod{q}$ for even and odd $q$. 

\begin{lemma}\label{lem: symmetric}
Let $M$ be a random $m\times m$ symmetric matrix whose diagonal is zero and whose entries above the diagonal are iid $\Bern(1/2)$ random variables. Fix $\boldsymbol{v}\in \mathbb{Z}_q^m$. Then, 
\[\mathbb P[M\boldsymbol{1}_m\equiv \boldsymbol{v}]=\begin{cases} \frac{1}{q^m}+p_m(\boldsymbol v)\,, & \textrm{ if }q \textrm{ is odd}\\
\frac{2}{q^m}+p_m(\boldsymbol v)\,, & \textrm{ if }q \textrm{ is even and }\sum_i v_i\text{ is even} \\
0\,, & \textrm{ if } q \textrm{ is even and }\sum_{i}v_i\text{ is odd}, \end{cases}
\]
where $p_m(\boldsymbol{v})=\frac{1}{q^m}\cdot e^{-\Omega(m)}.$
\end{lemma}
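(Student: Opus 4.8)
The plan is to evaluate $\mathbb P[M\boldsymbol 1_m\equiv\boldsymbol v]$ by discrete Fourier analysis on $\Z_q^m$ and then control the resulting error sum by a stratified counting argument; we may assume $m$ is large, as otherwise the claimed bound is vacuous. By orthogonality of the additive characters $\boldsymbol x\mapsto e_q(\boldsymbol t\cdot\boldsymbol x)$ of $\Z_q^m$,
\[
\mathbb P[M\boldsymbol 1_m\equiv\boldsymbol v]=\frac1{q^m}\sum_{\boldsymbol t\in\Z_q^m}e_q(-\boldsymbol t\cdot\boldsymbol v)\,\E\!\left[e_q(\boldsymbol t\cdot M\boldsymbol 1_m)\right].
\]
Since $M$ is symmetric with zero diagonal, writing $M_{ij}=M_{ji}=\xi_{ij}$ for $i<j$ (iid $\Bern(1/2)$) yields $\boldsymbol t\cdot M\boldsymbol 1_m=\sum_{i<j}(t_i+t_j)\xi_{ij}$, hence
\[
\E\!\left[e_q(\boldsymbol t\cdot M\boldsymbol 1_m)\right]=\prod_{i<j}\tfrac12\bigl(1+e_q(t_i+t_j)\bigr),
\]
a product of terms of modulus $|\cos(\pi(t_i+t_j)/q)|\le1$, with equality exactly when $t_i+t_j\equiv0$.

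The main term comes from $\boldsymbol t$ with $t_i+t_j\equiv0$ for all $i<j$. For $m\ge3$ this forces all coordinates of $\boldsymbol t$ to equal some $c$ with $2c\equiv0$; that is, $\boldsymbol t=\boldsymbol 0$ (contributing $1$) and, when $q$ is even, also $\boldsymbol t=(q/2)\boldsymbol 1_m$ (contributing $e_q(-(q/2)\sum_i v_i)=(-1)^{\sum_i v_i}$). These sum to $1$ for $q$ odd and to $1+(-1)^{\sum_i v_i}$ for $q$ even, matching the claimed leading terms; moreover, when $q$ is even with $\sum_i v_i$ odd one sees directly that $\mathbb P[M\boldsymbol 1_m\equiv\boldsymbol v]=0$, since the degree sum of any graph is even, so reducing the congruences $\deg(i)\equiv v_i$ modulo $2$ forces $\sum_i v_i$ to be even. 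It remains to show that the contribution $S:=\sum_{\boldsymbol t\ \mathrm{bad}}\prod_{i<j}|\cos(\pi(t_i+t_j)/q)|$ of the remaining (``bad'') $\boldsymbol t$ is $e^{-\Omega(m)}$; since this estimate is uniform in $\boldsymbol v$ (the factors $e_q(-\boldsymbol t\cdot\boldsymbol v)$ are unimodular), this gives $p_m(\boldsymbol v)=\frac1{q^m}e^{-\Omega(m)}$.

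Set $\delta:=\cos(\pi/q)<1$, so each factor coming from a pair with $t_i+t_j\not\equiv0$ is at most $\delta$, whence $\prod_{i<j}|\cos(\pi(t_i+t_j)/q)|\le\delta^{B(\boldsymbol t)}$, where $B(\boldsymbol t)$ counts the pairs $i<j$ with $t_i+t_j\not\equiv0$. Writing $n_c=|\{i:t_i=c\}|$ and $n^*=\max_c n_c$, I will show: (a) if $n^*<m/2$ then $B(\boldsymbol t)=\Omega(m^2)$, since every vertex has at most $n^*$ ``good'' neighbours and hence bad-degree $>m/2-1$; (b) if $n^*\ge m/2$ and a majority value $c^*$ satisfies $2c^*\not\equiv0$, then the $\binom{n^*}{2}=\Omega(m^2)$ pairs inside the majority class are all bad; (c) if $n^*\ge m/2$ and $2c^*\equiv0$, then each of the $n^*$ majority vertices has bad-degree exactly $\ell:=m-n^*$, so $B(\boldsymbol t)\ge\tfrac12 n^*\ell\ge m\ell/4$, with $\ell\ge1$ since $\boldsymbol t$ is bad. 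The $\boldsymbol t$ of types (a),(b) contribute at most $q^m\delta^{\Omega(m^2)}=e^{-\Omega(m^2)}$ to $S$. For type (c), the number of bad $\boldsymbol t$ with $m-n^*=\ell$ is at most $2\binom{m}{\ell}(q-1)^\ell\le2(mq)^\ell$ (choose $c^*$ among the at most two solutions of $2c\equiv0$, the $\ell$ minority coordinates, and their values $\ne c^*$), so these contribute at most $\sum_{\ell\ge1}2(mq)^\ell\delta^{m\ell/4}=2\sum_{\ell\ge1}\bigl(mq\,\delta^{m/4}\bigr)^\ell=e^{-\Omega(m)}$, since $mq\,\delta^{m/4}=o(1)$. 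Hence $S=e^{-\Omega(m)}$, completing the proof. The crux is step (c): a naive union bound over all $\le q^m$ bad $\boldsymbol t$ against the worst-case decay $\delta^{\Theta(m)}$ does not suffice, since one need not have $q\delta<1$; instead one must stratify the bad $\boldsymbol t$ by the number $\ell$ of ``minority'' coordinates and play the count $\binom{m}{\ell}(q-1)^\ell$ against the already-achieved decay $\delta^{\Omega(m\ell)}$.
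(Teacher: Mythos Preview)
Your proof is correct and follows the same overall Fourier-analytic strategy as the paper: expand $\mathbb P[M\boldsymbol 1_m\equiv\boldsymbol v]$ via characters, identify the main terms (the constant tuple, plus $(q/2)\boldsymbol 1_m$ when $q$ is even), and bound the remaining sum by controlling the number $B(\boldsymbol t)$ of ``bad'' pairs.

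Where you genuinely diverge is in the stratification of the error sum. The paper stratifies by the \emph{support size} $s$ of $\boldsymbol\ell$: for $s<m/2$ it pairs nonzero entries with zero entries to get $a(\boldsymbol\ell)\ge s(m-s)$; for $s\ge m/2$ it analyses the auxiliary graph on the support in which two coordinates are joined when they sum to $0$, observes this graph is a disjoint union of complete bipartite graphs, and bounds its edge count by $s^2/4$ to obtain $a(\boldsymbol\ell)\ge s(s-2)/4$. For even $q$ the paper must then run a separate three-term split according to both support size and the number of entries equal to $q/2$, using a shift-by-$(q/2)\boldsymbol 1$ trick to reduce the last term to the first. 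Your stratification by the \emph{majority value} $c^*$ and its multiplicity $n^*$ sidesteps all of this: cases (a) and (b) dispose of $\Omega(m^2)$ bad pairs at once, and case (c) handles $c^*\in\{0,q/2\}$ uniformly via the single parameter $\ell=m-n^*$, so no separate even-$q$ analysis or bipartite-graph lemma is needed. The trade-off is that the paper's bipartite argument gives slightly sharper constants in the exponent, while your decomposition is shorter and more transparent; both yield the required $e^{-\Omega(m)}$.
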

 
The third lemma is a uniformity result about the joint distribution of $\boldsymbol{1}_s^TM$ and $M\boldsymbol{1}_t$ for a random $s\times t$ matrix $M$ with iid $\Bern(1/2)$ entries.
\begin{lemma}\label{lem: both ways}

Let $s,t$ be sufficiently large integers such that $\omega(\log t)=s\leq t$.
If $M$ is an $s\times t$ random matrix with iid $\Bern(1/2)$ entries and $\boldsymbol{u}\in \mathbb{Z}_q^s$ and $\boldsymbol{v}\in \mathbb{Z}_q^t$ are such that $\sum u_i\equiv\sum v_j \pmod q$, then
\[\mathbb P[\boldsymbol{1}_s^TM\equiv \boldsymbol{v}^T \textrm{ and }M\boldsymbol{1}_t\equiv \boldsymbol{u}]=\frac{1+o(1)}{q^{s+t-1}}.\]
\end{lemma}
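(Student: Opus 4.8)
The plan is to expand the indicator of the event via discrete Fourier analysis over $\Z_q$. Writing $M_{ij}$ for the entries of $M$ and using $\mathbf 1[z\equiv 0]=\frac1q\sum_{x\in\Z_q}e_q(xz)$ for each coordinate equation, linearity of expectation together with independence of the entries yields
\[
\mathbb P\bigl[\boldsymbol 1_s^TM\equiv\boldsymbol v^T,\ M\boldsymbol 1_t\equiv\boldsymbol u\bigr]
=\frac{1}{q^{s+t}}\sum_{\boldsymbol x\in\Z_q^s}\sum_{\boldsymbol y\in\Z_q^t}e_q\Bigl(-\textstyle\sum_i x_iu_i-\sum_j y_jv_j\Bigr)\prod_{i=1}^s\prod_{j=1}^t\phi(x_i+y_j),
\]
where $\phi(a):=\E[e_q(aM_{11})]=\tfrac12\bigl(1+e_q(a)\bigr)$, so that $|\phi(a)|=|\cos(\pi a/q)|$; in particular $|\phi(a)|=1$ exactly when $a\equiv 0$, and $|\phi(a)|\le\beta:=\cos(\pi/q)<1$ for $a\not\equiv 0$ (with $\beta=0$ when $q=2$).

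Next I would isolate the main term. A product $\prod_{i,j}\phi(x_i+y_j)$ has modulus $1$ if and only if $x_i+y_j\equiv 0$ for all $i,j$, which forces $\boldsymbol x=c\boldsymbol 1_s$ and $\boldsymbol y=-c\boldsymbol 1_t$ for some $c\in\Z_q$. For each such resonant pair the product equals $1$ and, by the hypothesis $\sum_i u_i\equiv\sum_j v_j$, the character factor is $e_q\bigl(c(\sum_j v_j-\sum_i u_i)\bigr)=1$; summing over the $q$ values of $c$ contributes exactly $q/q^{s+t}=q^{-(s+t-1)}$. It then remains to show that the contribution of all other pairs is $o\bigl(q^{-(s+t-1)}\bigr)$, i.e.\ that $E:=\sum_{(\boldsymbol x,\boldsymbol y)\text{ non-resonant}}\prod_{i,j}|\phi(x_i+y_j)|=o(1)$.

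To estimate $E$ I would sum over $\boldsymbol x$ first: factorizing over the rows gives $\sum_{\boldsymbol x,\boldsymbol y}\prod_{i,j}|\phi(x_i+y_j)|=\sum_{\boldsymbol y}h(\boldsymbol y)^s$ with $h(\boldsymbol y):=\sum_{x\in\Z_q}\prod_{j=1}^t|\phi(x+y_j)|$, so that $E=\sum_{\boldsymbol y}h(\boldsymbol y)^s-q$. The crucial estimate is a bound on $h$ measuring how far $\boldsymbol y$ is from being constant: letting $d(\boldsymbol y):=t-\max_{c\in\Z_q}|\{j:y_j\equiv c\}|$, one shows $h(\boldsymbol y)\le\beta^{d(\boldsymbol y)}+(q-1)\beta^{\,t-d(\boldsymbol y)}$, since for each $x$ at most $t-m$ of the factors $|\phi(x+y_j)|$ differ from $1$, where $m=|\{j:y_j\equiv-x\}|$, and as $x$ ranges over $\Z_q$ the numbers $m$ sum to $t$ with unique leading value $t-d(\boldsymbol y)$. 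Now split the sum over $\boldsymbol y$ by the value of $d:=d(\boldsymbol y)$. The $q$ constant vectors ($d=0$) give $h(\boldsymbol y)^s\le(1+(q-1)\beta^t)^s=1+o(1)$ since $s\le t$ and $t\beta^t\to0$, accounting for the main $q$ up to $o(1)$. For $1\le d\le t/3$ one has $h(\boldsymbol y)\le\beta^d(1+o(1))$ uniformly (because $\beta^{\,t-d}\le\beta^{\,2t/3}$ is negligible), and using $|\{\boldsymbol y:d(\boldsymbol y)=d\}|\le q\binom td(q-1)^d$ together with $\beta^s=e^{-\omega(\log t)}=t^{-\omega(1)}$, the corresponding part of the sum is at most $q\sum_{d\ge1}\binom td\bigl((q-1)\beta^s\bigr)^d\le q\bigl(e^{t(q-1)\beta^s}-1\bigr)=o(1)$. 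Finally, for $d>t/3$ one has $h(\boldsymbol y)\le q\beta^{\,t/q}$ for all of the at most $q^t$ such vectors, so this part is at most $q^{t+s}\beta^{\,st/q}\to0$ because $s\to\infty$ forces $\tfrac sq|\ln\beta|$ eventually to exceed $2\ln q\ge(t+s)\ln q/t$. Altogether $\sum_{\boldsymbol y}h(\boldsymbol y)^s=q+o(1)$, hence $E=o(1)$, and combining with the main term gives $\mathbb P[\cdots]=\frac{q+o(1)}{q^{s+t}}=\frac{1+o(1)}{q^{s+t-1}}$.

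I expect the main obstacle to be the regime $d>t/3$, where there are as many as $q^t$ vectors $\boldsymbol y$ to control, each contributing only a doubly-exponentially small amount; here the hypothesis $s=\omega(\log t)$ (indeed already $s\to\infty$, once the per-vector bound is doubly-small) is precisely what is needed to beat the $q^t$ count. A secondary delicate point is the small-$d$ regime: one must keep the parasitic term $(q-1)\beta^{\,t-d}$ genuinely negligible so that $h(\boldsymbol y)\lesssim\beta^d$ rather than merely $\lesssim q\beta^d$, since otherwise the resulting factor $(q\beta)^s$ fails to decay for $q\ge3$; and it is again $s=\omega(\log t)$, through $\beta^s=t^{-\omega(1)}$, that tames the binomial counts $\binom td$.
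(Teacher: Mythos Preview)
Your proof is correct. Both your argument and the paper's begin identically: expand via characters over $\Z_q^s\times\Z_q^t$, observe that the $q$ ``resonant'' pairs $(\boldsymbol x,\boldsymbol y)=(c\boldsymbol 1_s,-c\boldsymbol 1_t)$ contribute exactly $q/q^{s+t}$, and reduce the problem to showing that the total modulus $E$ of the non-resonant terms is $o(1)$.

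The two proofs diverge in how they bound $E$. The paper keeps the full double sum and bins non-resonant pairs $(\boldsymbol\ell,\boldsymbol m)$ according to $z=\#\{(i,j):\ell_i+m_j\not\equiv 0\}$; it then runs a somewhat intricate level-set case analysis (looking at the largest level set of $\boldsymbol m$, then forcing $\boldsymbol\ell$ to be nearly constant, etc.) to bound $|N(z)|$ and show $\sum_z|N(z)|e^{-2z/q^2}=o(1)$. You instead exploit the product structure over rows: summing over $\boldsymbol x$ first collapses the double sum to $\sum_{\boldsymbol y}h(\boldsymbol y)^s$, and the whole analysis becomes one-dimensional in $\boldsymbol y$, governed by the single parameter $d(\boldsymbol y)=t-\max_c|\{j:y_j\equiv c\}|$. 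Your key inequality $h(\boldsymbol y)\le\beta^{d}+(q-1)\beta^{\,t-d}$ (which follows from $\sum_x m_x=t$ and $m_x\le d$ for all but the leading $x$) replaces the paper's level-set combinatorics with a two-line observation.

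What each approach buys: your factorisation makes the argument shorter and more transparent, and it isolates exactly where the hypothesis $s=\omega(\log t)$ enters (forcing $\beta^s=t^{-\omega(1)}$ so that $\sum_{d\ge 1}\binom{t}{d}((q-1)\beta^s)^d=o(1)$). The paper's approach is more symmetric in $\boldsymbol\ell$ and $\boldsymbol m$ and aligns stylistically with its proof of the symmetric-matrix lemma, but the resulting case analysis on $|N(z)|$ is longer. Your remark that in the regime $d>t/3$ only $s\to\infty$ (not the full $s=\omega(\log t)$) is needed is also correct: once $h(\boldsymbol y)\le q\beta^{t/q}$, the bound $q^{s+t}\beta^{st/q}\to 0$ follows from $s\le t$ and $s$ exceeding a constant depending on $q$.
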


Observe that if $\boldsymbol{u}\in \mathbb{Z}_q^s$ and $\boldsymbol{v}\in \mathbb{Z}_q^t$ are such that $\sum u_i\not\equiv \sum v_j \pmod q$, then we clearly cannot find a $0/1$ matrix $M$ for which $\boldsymbol{1}_s^TM\equiv\boldsymbol{v} \textrm{ and }M\boldsymbol{1}_t\equiv\boldsymbol{u}$.
Moreover, since there are exactly $q^{s+t-1}$ pairs $(\boldsymbol{u},\boldsymbol{v})$ with $\sum u_i\equiv\sum_jv_j$, we see that for $M$ distributed as above, the pair $(\boldsymbol{1}_s^TM,M\boldsymbol{1}_t)$ is approximately uniformly distributed among all ``feasible'' values.


\subsection{Proofs of key lemmas}
We shall prove the above lemmas using some elementary discrete Fourier analysis (for a thorough introduction, the reader is referred to \cite{stein2011fourier}).
In what follows, $\delta^{(m)}:\Z_q^m\to \{0, 1\}$ is given by:
\[
\delta^{(m)}(\boldsymbol{x}) = \begin{cases}
1\,, &\text{if }\boldsymbol{x} \equiv \boldsymbol{0}\\
0\,,&\text{if }\boldsymbol{x}\not\equiv\boldsymbol{0}.
\end{cases}
\]

\begin{proof}[Proof of Lemma \ref{lem: one row}]
    First note that the claim is clearly true for $q=2$ since a sum of iid $\Bern(1/2)$ random variables is even or odd with equal probability.
    In any case, we write
    \[
    \mathbb P[\xi_1 + \cdots +\xi_n \equiv a] = \E[\delta^{(1)}(\xi_1 + \cdots + \xi_n - a)],
    \]
    and then expand $\delta^{(1)}$ into its Fourier series.
    \[
        \E[\delta^{(1)}(\xi_1 + \cdots + \xi_n - a)] = \frac{1}{q}\sum_{\ell \in \Z_q}\E\big[e_q\big(\ell(\xi_1 + \cdots + \xi_n-a)\big)\big],
    \]
    which, by independence, becomes
    \[
    \frac{1}{q}\sum_{\ell\in \Z_q}e_q(-\ell a)\prod_{j=1}^n\E[e_q(\ell \xi_j)] = \frac{1}{q}\sum_{\ell \in \Z_q}e_q(-\ell a)\left(\frac{1+e_q(\ell)}{2}\right)^n.
    \]
    We isolate the $\ell \equiv 0$ term, apply the triangle inequality and set $p_n(a)$ to be the difference:
    \[
        p_n(a) := \mathbb P[\xi_1 + \cdots + \xi_n \equiv a] - \frac{1}{q},
    \]
    which we then estimate.
    \[
        |p_n(a)| \leq \frac{1}{q}\sum_{\ell = 1}^{q-1}|\cos(\pi \ell/q)|^n.
    \]
    Note that when $q=2$, the right-hand side above is simply zero.
    It is easy to verify that for $1\le\ell\le q-1$, we have: $|\cos(\pi\ell/q)|\leq e^{-2/q^2}$.
    Using this, we obtain
    \begin{align*}
        |p_n(a)| \leq \frac{1}{q}\sum_{\ell = 1}^{q-1}e^{-2n/q^2} = \frac{q-1}{q}e^{-2n/q^2}.
    \end{align*}
This completes the proof. 
\end{proof}

\begin{proof}[Proof of Lemma \ref{lem: symmetric}]
    Like in the proof of Lemma \ref{lem: non-symmetric}, we write the probability of interest as the expectation of a delta function, and then expand it into its Fourier series.
    \begin{equation}\label{eq: fourier}
        \mathbb P[M\boldsymbol{1}_m\equiv\boldsymbol{v}]=\frac{1}{q^m}\sum_{\boldsymbol{\ell}\in \mathbb{Z}_q^m}\mathbb{E}\left[e_q\left(\boldsymbol{\ell}^TM\boldsymbol{1}_m\right)\right]e_q\left(-\boldsymbol{\ell}^T\boldsymbol{v}\right).
    \end{equation}
    
    Now, letting $M_{jk}$ be the entries of the matrix $M$, by the fact that $M_{jk}=M_{kj}$ for all $j$ and $k$, the right-hand side of \eqref{eq: fourier} equals 
    \[
    \frac{1}{q^m}\sum_{\boldsymbol{\ell}\in \mathbb{Z}_q^m}\mathbb{E}\left[e_q\left(\sum_{j<k}(\ell_j+\ell_k)M_{jk}\right)\right]e_q\left(-\boldsymbol{\ell}^T\boldsymbol{v}\right),
    \]
    which by independence becomes
    \begin{equation}\label{probability}
    \frac{1}{q^m}\sum_{\boldsymbol{\ell}\in \mathbb{Z}_q^m}e_q\left(-\boldsymbol{\ell}^T\boldsymbol{v}\right)\prod_{1\leq j<k\leq m}\left(\frac{1 + e_q(\ell_j+\ell_k)}{2}\right).
    \end{equation}
    Let us first consider the case where $q$ is odd. The product in the above expression is 1 if and only if $\boldsymbol{\ell} \equiv \boldsymbol{0}$, in which case we isolate this term, set $p_m(\boldsymbol{v})$ to be the difference, and estimate:
    \[
    |p_m(\boldsymbol{v})| = \left|\mathbb P[M\boldsymbol{1}_m\equiv v]-\frac{1}{q^m}\right|\leq \frac{1}{q^m}\sum_{\boldsymbol{\ell}\in \mathbb{Z}_q^m\setminus\{\boldsymbol{0}\}}\prod_{1\leq j<k\leq m}\left|\cos\left(\frac{\pi}{q} (\ell_j+\ell_k)\right)\right|.
    \]
    
    For each vector $\boldsymbol{\ell}\in \mathbb{Z}_q^m$, we let $a(\boldsymbol{\ell})$ be the number of pairs $1\leq j<k\leq m$ for which $\ell_j+\ell_k\not \equiv0\pmod q$. Next, we again apply the bound $|\cos(\pi r / q)| \leq e^{-2/q^2}$ for $r \neq 0 \pmod q$ to obtain
    \[|p_m(\boldsymbol{v})|\leq \frac{1}{q^m}\sum_{\boldsymbol{\ell}\in \mathbb{Z}_q^m\setminus \{\boldsymbol{0}\}}e^{-2\cdot a(\boldsymbol{\ell})/q^2}.\]
    
    Now, let $L_s$ and $L_{\geq s}$ be the set of all vectors $\boldsymbol{\ell}\in \mathbb{Z}_q^m$ with support of size exactly $s$, and those with support at least $s$, respectively.
    We split the above sum into one over vectors with small support and those with large support.
    \[
    |p_m(\boldsymbol{v})| \leq \frac{1}{q^m}\left(\sum_{1\leq s<m/2}|L_s|e^{-2s(m-s)/q^2} + \sum_{m/2\leq s \leq m}|L_s|\cdot e^{-s(s-2)/2q^2}\right)\,.
    \]
    The first sum comes from pairing the nonzero entries of $\boldsymbol{\ell}\neq \boldsymbol{0}$ with its zero entries, giving a bound of $a(\boldsymbol{\ell}) \geq s(m-s)$.
    For the second sum, start by considering the auxiliary graph on the nonzero entries of $\boldsymbol{\ell}\neq \boldsymbol{0}$ where we connect two vertices if they sum to zero modulo $q$.
    Since $q$ is odd, this graph has no loops and is a vertex disjoint union of complete bipartite graphs, with parts corresponding to residue classes among the entries of $\boldsymbol{\ell}$.
    The maximum number of edges in such a graph is $s^2/4$, which is achieved when half of the support is equal to $r$ and the rest is equal to $-r$ for some residue $r$.
    The number of pairs of entries in the support of $\boldsymbol{\ell}$ that sum to a nonzero residue modulo $q$ is then at least $\binom{s}{2}-s^2/4 = s(s-2)/4$.
    We then estimate $|L_s|$ and crudely bound $|L_{\geq m/2}|$ by $q^m$ to obtain an upper bound for $|p_m(\boldsymbol{v})|$.
    \[
    \begin{split}
    |p_m(\boldsymbol{v})| &\leq \frac{1}{q^m}\left(\sum_{1\leq s<m/2}\binom{m}{s}q^se^{-sm/q^2}+|L_{\geq m/2}|e^{-m^2/16q^2}\right)\\
    &= \frac{1}{q^m}\cdot e^{-\Omega(m)}.
    \end{split}
    \]
    
    When $q$ is even, the entries of $M\boldsymbol{1}_m$ sum to an even residue modulo $q$, so $\mathbb P[M\boldsymbol{1}_m \equiv \boldsymbol{v}] = 0$ when $\boldsymbol{v}$ does not satisfy this condition.
    Assume then that $\sum_j v_j$ is an even residue modulo $q$.
    The product in (\ref{probability}) is 1 if and only if $\boldsymbol{\ell}$ is  $\boldsymbol{0}$ or $\frac{q}{2}\cdot \boldsymbol{1}$, and since $e^{-2\pi i \boldsymbol{\ell}^T\boldsymbol{v}/q} = 1$ for these values of $\boldsymbol{\ell}$, these terms combine to give us a leading term of $\frac{2}{q^m}$. We isolate these terms and bound the difference.
    \[
    \begin{split}
    |p_m(\boldsymbol{v})| &= \left|\mathbb P[M\boldsymbol{1}_m=v]-\frac{2}{q^m}\right|\\
    &\leq \frac{1}{q^m}\sum_{\boldsymbol{\ell}\in \mathbb{Z}_q^m\setminus\{\boldsymbol{0}, \frac{q}{2}\cdot \boldsymbol{1}\}}\prod_{1\leq j<k\leq m}\left|\cos\left(\frac{\pi}{q} (\ell_j+\ell_k)\right)\right|\\
    &\leq \frac{1}{q^m}\sum_{\boldsymbol{\ell}\in \mathbb{Z}_q^m\setminus \{\boldsymbol{0}, \frac{q}{2}\cdot \boldsymbol{1}\}}e^{-2\cdot a(\boldsymbol{\ell})/q^2}\,.
    \end{split}
    \]
    Like before, we let $L_s$ denote the set of vectors in $\Z_q^m$ with support of size exactly $s$, and now we let $L_{s, t}$ denote the set of vectors in $\Z_q^m$ with support $s$ and exactly $t$ entries equal to $q/2$.
    We again split the above sum according to the size of the support of $\boldsymbol{\ell}$ and then further by the number of entries equal to $q/2$.
    \[
        |p_m(\boldsymbol{v})| \leq \frac{1}{q^m}\left(\sum_{s=1}^{3m/4}|L_s|e^{-2s(m-s)/q^2} + \sum_{s=3m/4}^m\sum_{t=0}^{m/4}|L_{s, t}|e^{-(s-t)(s-t-2)/2q^2} + \sum_{s=3m/4}^{m}\sum_{t=m/4}^{\min(s, m-1)}|L_{s,t}| e^{-2t(m-t)}\right).
    \]
    The first sum comes from pairing up the zero and nonzero entries of $\boldsymbol{\ell}\in \Z_q^m \setminus\{\boldsymbol{0}, \frac{q}{2}\cdot \boldsymbol{1}\}$.
    For the second sum, we consider the graph whose vertex set is the nonzero, non-$q/2$ entries of $\boldsymbol{\ell}$ and apply the same argument from the odd $q$ case.
    In the third sum, we pair the entries equal to $q/2$ with those that are not.\\
    
    The first and second sums are $e^{-\Omega(m)}$ by the same argument used for odd $q$.
    To bound the third sum, note that $\ell_i + \ell_j \equiv 0\pmod{q}$ if and only if $(\ell_i+q/2)+(\ell_j+q/2)\equiv 0\pmod{q}$.
    Therefore, by adding $(q/2)\cdot \boldsymbol{1}$ to every vector considered in the third sum, we obtain a set of vectors with support at most $3m/4$.
    The third sum is then at most a subsum of the first, so this upper bound for $|p_m(\boldsymbol{v})|$ is at most $e^{-\Omega(m)}/q^m$.
\end{proof}

\begin{proof}[Proof of Lemma \ref{lem: both ways}]

    We use the same Fourier-based approach we employed to prove Lemmas \ref{lem: non-symmetric} and \ref{lem: symmetric}. Set $p(\boldsymbol{u}, \boldsymbol{v}) = \mathbb P[\boldsymbol{1}_s^TM\equiv\boldsymbol{v},\ M\boldsymbol{1}_t\equiv\boldsymbol{u}]$. Then we have
   \[ 
    \begin{split}
        p(\boldsymbol{u}, \boldsymbol{v}) &= \E[\delta^{(t)}_0(\boldsymbol{1}_s^TM-\boldsymbol{v}^T)\cdot \delta^{(s)}_0(M\boldsymbol{1}_t-\boldsymbol{u})]\\
        &= \frac{1}{q^{s+t}}\sum_{(\boldsymbol{\ell}, \boldsymbol{m})\in \Z_q^s \times \Z_q^t}\prod_{\substack{1\leq j\leq s\\ 1\leq k\leq t}}\E\big[e_q\big(M_{jk}\left(\ell_j + m_k\right)\big)\big] \cdot e_q\left(-\left(\boldsymbol{v^T}\boldsymbol{m} + \boldsymbol{\ell}^T\boldsymbol{u}\right)\right).
    \end{split}
    \]
    
    
    
    
    Note that for all pairs $(\boldsymbol{\ell},\boldsymbol{m})$ for which $\ell_j+m_k \equiv 0\pmod q$ for all $j,k$, we have that the product in the above expression equals $1$.
    Moreover, it is easy to see that $\ell_j+m_k \equiv 0 \pmod q$ for all $j,k$ if and only if there exists some $r\in \mathbb{Z}_q$ for which $\boldsymbol{m}\equiv r\cdot \boldsymbol{1}_t$ and $\boldsymbol{\ell}\equiv -r\cdot \boldsymbol{1}_s$.
    Since there are exactly $q$ such pairs $(\boldsymbol{\ell},\boldsymbol{m})$, by letting $\mathcal Z$ be the set of all pairs $(\boldsymbol{\ell},\boldsymbol{m})\in \mathbb{Z}_q^s\times \mathbb{Z}_q^t$ which are not of this form, we have
    \[
    p(\boldsymbol{u}, \boldsymbol{v}) = \frac{1}{q^{s+t-1}} + \frac{1}{q^{s+t}}\sum_{(\boldsymbol{\ell}, \boldsymbol{m})\in \mcal{Z}}\prod_{\substack{1\leq j\leq s\\ 1\leq k\leq t}}\E\big[e_q\big(M_{jk}\left(\ell_j + m_k\right)\big)\big] \cdot e_q\left(-\left(\boldsymbol{v^T}\boldsymbol{m} + \boldsymbol{\ell}^T\boldsymbol{u}\right)\right)
    \]
    
    
    We define the error $q_{s, t}(\boldsymbol u, \boldsymbol v)$ to be the difference between the left-hand side and the first term on the right-hand side above.
    Applying the exponential bound for the cosine gives
    
    
    
    \[|q_{s, t}(\boldsymbol u, \boldsymbol v)| \leq \frac{1}{q^{t+s}}\sum_z |N(z)|e^{-2z/q^2},\]
    where $N(z)$ is the set of all pairs $(\boldsymbol{\ell}, \boldsymbol{m})\in \mathcal Z$ for which the number of non-zero residues among $\ell_i+m_j$, where $1\leq i\leq s$ and $1\leq j\leq t$, is exactly $z$.
    Note that if $z\geq z_0:=\frac{3}{2}q^2(s+t)\log q$, then  $e^{-2z/q^2}\leq 1/q^{3(s+t)}$ and therefore, even if we use the crude bound $\sum_{z\geq z_0}|N(z)|\leq q^{s+t}$, we obtain
    \begin{equation}\label{large z}
    \sum_{z\geq z_0}|N(z)| e^{-2z/q^2}\leq \frac{1}{q^{2(s+t)}}=e^{-\Omega(s+t)}.
    \end{equation}
    It is thus enough to prove that
    \begin{equation}\label{small z}
    \sum_{0<z< z_0}|N(z)|e^{-2z/q^2}=o(1).
    \end{equation}


    To this end, we fix $0<z<z_0$ and investigate when $(\boldsymbol{\ell}, \boldsymbol{m})\in N(z)$.
    Given a positive integer $d$ and a vector $\boldsymbol{w}\in \mathbb{Z}_q^d$, we define the $r$-th \emph{level set} of $\boldsymbol{w}$, $L_r(\boldsymbol{w})\subseteq [d]$ for some $r\in \Z_q$, by
    \[
    L_r(\boldsymbol{w}) = \{i: w_i \equiv r\pmod q\},
    \]
    and we write $L_{\neq r}$ for the set of all other indices.
    Let us first observe that for $(\boldsymbol{\ell},\boldsymbol{m})\in \mathcal Z$, if there exists some residue $r$ for which
    $|L_r(\boldsymbol{\ell})|\cdot|L_{\neq -r}(\boldsymbol{m})|> z$, then we have that $(\boldsymbol{\ell},\boldsymbol{m})\notin N(z)$.
    
    Suppose that each level set of $\boldsymbol{m}\in \Z_q^t$ has size at most $t/2$.
    For any $\boldsymbol{\ell}\in \Z_q^s$, choose $r\in \Z_q$ such that $|L_r(\boldsymbol{\ell})|\geq s/q$.
    We have $|L_{\neq -r} (\boldsymbol{m})|\geq t/2$, which, for large $s$ and $t$, implies that
    \[|L_r(\boldsymbol{\ell})|\cdot |L_{\neq -r}(\boldsymbol{m})|\geq st/2q>z_0>z\]
    and therefore $(\boldsymbol{\ell},\boldsymbol{m})\notin N(z).$
    
    In order for $(\boldsymbol{\ell}, \boldsymbol{m})$ to lie in $N(z)$ it must then be the case that $\boldsymbol{m}$ has a (unique) level set of size $a > t/2$, say $L_r(\boldsymbol{m})$.
    Observe that we cannot have $|L_{\neq -r}(\boldsymbol{\ell})|> z/a$, or else
    \[|L_{\neq -r}(\boldsymbol{\ell})|\cdot |L_r(\boldsymbol{m})|>z.\]
    Therefore, from now on we assume that  $|L_{\neq -r}(\boldsymbol{\ell})|\leq z/a$ which is equivalent to $|L_{-r}(\boldsymbol{\ell})|\geq s-z/a$.
    Next, if $|L_{\neq r}(\boldsymbol{m})|\geq 2z/s$ then
    \[
    |L_{-r}(\boldsymbol{\ell})|\cdot |L_{\neq r}(\boldsymbol{m})|\geq (s-z/a)\cdot 2z/s = 2z(1-z/as) \geq 2z(1-z/(st/2)),
    \]
    which is larger than $z$ for large $s$ and $t$ and we once again have that $(\boldsymbol{\ell},\boldsymbol{m})\notin N(z)$.
    We then assume that $|L_{ r}(\boldsymbol{m})|> t-2z/s$.
    Since $(\boldsymbol{\ell},\boldsymbol{m})\in \mathcal Z$, then $|L_{\neq -r}(
    \boldsymbol{\ell})|>0$ or $|L_{\neq r}(\boldsymbol{m})|>0$, and therefore the number of non-zero residues among $\ell_i+k_j$ is at least
    \[
        |L_{-r}(\boldsymbol{\ell})|\cdot |L_{\neq r}(\boldsymbol{m})|+|L_{\neq -r}(\boldsymbol{\ell})|\cdot |L_{ r}(\boldsymbol{m})|\geq \min\{s-z/a,t-2z/s\},
    \]
    which is at least $s/2$ for large $s$ and $t$.
    In particular, we see that $N(z)$ is empty for all $0<z\leq s/2$.
    Therefore, we may assume that $s/2<z<z_0$.
    
    All in all, we have
    \[
    \begin{split}
        |N(z)|&\leq \binom{s}{z/a}q^{z/a+1}\binom{t}{2z/s}q^{2z/s}\\
        &\leq q\cdot (qt)^{z/a+2z/s}\\
        &\leq q\cdot e^{\frac{4z}{s}\cdot \log(q\cdot t)}.
    \end{split}
    \]
 
    Indeed, we choose a residue $r$, at most $z/a$ elements of $\boldsymbol{\ell}$ to fill out $L_{\neq -r}(\boldsymbol{\ell})$ and at most $2z/s$ elements of $\boldsymbol{m}$ for $L_{\neq r}(\boldsymbol{m})$.
    The last inequality follows from the fact that $a\geq t/2$ and $t\geq s$.
    Finally, we have
    \begin{align*}
        \sum_{0<z< z_0}|N(z)|e^{-2z/q^2} & = \sum_{z = s/2}^{z_0}|N(z)|e^{-2z/q^2}\\
        &\leq \sum_{z = s/2}^{z_0}\exp\left[ z\left(\frac{4\log(qt)}{s} - \frac{2}{q^2}\right)\right].
    \end{align*}
    Now, since $s = \omega(\log t)$, the argument of the exponential is negative for all $z$ when $s,t$ are sufficiently large, in which case we have
    \begin{align*}
        \sum_{z\leq z_0}|N(z)|e^{-2z/q^2} &\leq z_0 \cdot \exp\left[\frac{s}{2}\left(\frac{4\log(qt)}{s} - \frac{2}{q^2}\right)\right].
    \end{align*}
    Since $z_0 = \frac{3}{2}q^2(s+t)\log q \leq 3q^2 t\log q$ and $s = \omega(\log t)$, the above quantity is $o(1)$.
    We have then established (\ref{small z}), which combined with (\ref{large z}) gives the desired conclusion.
\end{proof}

\section{Large induced subgraphs}
Here we prove Theorem \ref{thm: one part} with the second moment method and then make slight modifications to this proof to arrive at Theorem \ref{thm: one part distrubution}.

\subsection{Proof of Theorem \ref{thm: one part}}
The strategy is to first show that we expect $G:=G_{n,1/2}$ to have many induced subgraphs of the type specified by Theorem \ref{thm: one part}.
We will then show that this number of good subgraphs is asymptotically concentrated around its mean and then apply Chebyshev's inequality to conclude that $G$ has a good subgraph with high probability.

\begin{proof}[Proof of Theorem \ref{thm: one part}]
    For simplicity, we will assume throughout the proof that $q$ is odd and point out where special considerations need to be made for even $q$.
    We may further assume that $q\geq 3$ since the (slightly simpler) case of $q = 2$ was handled by Scott in \cite{scott1992large}.
    
    Fix  a positive integer $k$ to be determined later, and let $X_k$ be the number of $k$-vertex induced subgraphs of $G:= G_{n, 1/2}$, all of whose degrees are $r \pmod q$, henceforth known as ``good'' subgraphs.
    Such subgraphs correspond to $k\times k$ principal submatrices $B$ of the adjacency matrix of $G$ satisfying $B\boldsymbol{1}_k \equiv r\boldsymbol{1}_k \pmod{q}$ (all vector congruences are entrywise modulo $q$, and we omit ``mod $q$'' when this condition is clear from context).
    By Lemma \ref{lem: symmetric}, the expected number of good subgraphs is
    \begin{equation}\label{expectation}
    \mathbb{E}[X_k] = \binom{n}{k}\left(\frac{1}{q^k}+\frac{o(1)}{q^k}\right)
    \end{equation}
    when $q$ is odd, and twice this when $q$ is even and $k$ is such that $G$ could possibly admit such good subgraphs (we may always take $k$ to be even in this case, for instance).
    Now choose $k'$ to be the greatest integer such that $g(k)\coloneqq \binom{n}{k}\frac{1}{q^{k}}$ is at least $1$.
    Since $g(k) \geq (\frac{n}{qk})^k$, we see that $g(k) \ge 1$ when $k \leq n/q$, so $k' \ge n/q$.
    For any positive integer $t$, we have that
    \[
        g(k'+t) \leq \left(\frac{n-(k'+t)+1}{(k'+1)q}\right)^tg(k').
    \]
     Set $t = \log^{10}n$. Then the first factor on the right-hand side can be bounded from above by  $c^{\log^{10}n}$ for some constant $c<1$.
    Since $\frac{g(k)}{g(k+1)} = q\cdot \frac{k+1}{n-k}$ for any $k$, our choice of $k'$ ensures that we may bound $g(k')$ above by a constant.
    We conclude that $g(k'+t) = o(1)$, so $X_{k'+t} = 0$ with high probability by Markov's inequality.
    In the other direction, observe that
    \[
    g(k'-t) \geq \left(\frac{k'-t+1}{n-k'+t}q\right)^tg(k').
    \]
    As $k' \ge n/q$, this quantity is exponentially large in $t$, it follows that $\mathbb{E}[X_{k'-t}]=e^{\Omega(\log^{10}n)}$.

    Now we set $k = k' - \log^{10}n$ and show that $X_k$ is asymptotically concentrated around its mean.
    To this end, it suffices to show that $\Var[X_k] = o(\mathbb E[X_k]^2)$ and to apply Chebyshev's inequality.
    For every $I\in \binom{V}{k}$, let $X_I$ be the random variable indicating whether or not $G[I]$ is a good subgraph.
    We write the variance of $X_k$ in terms of these indicator variables.
    \begin{equation}\label{1.1: variance bound}
    \Var[X_k] = \sum_{I\in \binom{V}{k}}\Var[X_I] + \sum_{I\neq J\in \binom{V}{k}}\Cov(X_I, X_J) \leq \E[X_k] + \sum_{I\neq J\in \binom{V}{k}}\Cov(X_I, X_J).
    \end{equation}
    As $\mathbb E[X_k]$ is clearly $o(\mathbb E[X_k]^2)$, it suffices to show that the same is true for the covariance sum.\smallskip

    If the sets $I$ and $J$ are disjoint, then their corresponding submatrices do not overlap, so the variables $X_I$ and $X_J$ are independent and $\Cov(X_I, X_J) = 0$.
    On the other hand, if $|I\cap J|$ is large, say greater than $k-\log^2 n$, then the submatrices corresponding to $I$ and $J$ share many entries and $\Cov(X_I, X_J)$ may be large.
    Let $\mcal{B}$ be the collection of all pairs of such sets with large overlap,
    \begin{equation}\label{1.1: big overlap}
    \mcal{B} = \left\{(I, J): I,J\in \binom{V}{k},\ I\neq J,\ |I\cap J| \geq k-\log^2 n\right\}.
    \end{equation}
    Notice that $|\mcal{B}| \leq \binom{n}{k}\binom{k}{\log^2 n}\binom{n}{\log^2 n}\log^2 n$, so we can bound $\mcal{B}$'s contribution to the covariance. For any $I'\in \binom{V}{k}$ we have
    \begin{equation}\label{1.1: big overlap bound}
    \begin{split}
        \sum_{(I,J)\in \mcal{B}}\Cov(I, J) &\leq \binom{n}{k}\binom{k}{\log^2 n}\binom{n}{\log^2n}\log^2 n\cdot \max_{(I,J)\in \mcal{B}} \E[X_IX_J]\\
        &\leq \binom{n}{k}\binom{k}{\log^2 n}\binom{n}{\log^2 n}\log^2 n\cdot \E[X_{I'}]\\
        &= \binom{k}{\log^2 n}\binom{n}{\log^2 n}\log^2 n\cdot \E[X_k]\\
        &= o(\E[X_k]^2).
    \end{split}
    \end{equation}
    
    Now we stratify $\mcal{B}^C$ (which is the complement of $\mcal{B})$ according to the size of the overlap.
    Suppose that $(I, J)\in \mcal{B}^C$ and  $|I\cap J| = d$. The corresponding covariance term is
    \[
    \Cov(X_I, X_J) = \mathbb P[A_I\boldsymbol{1}_k  \equiv r\boldsymbol{1}_k,\ A_J\boldsymbol{1}_k \equiv  r\boldsymbol{1}_k] - \mathbb P[A_I\boldsymbol{1}_k \equiv r\boldsymbol{1}_k]\cdot \mathbb P[A_J\boldsymbol{1}_k \equiv r\boldsymbol{1}_k],
    \]
    where $A_I$ is the principal submatrix of $A(G)$ obtained by removing row and column $i$ for all $i\notin I$.
    Suppose that $A_I$ and $A_J$ are both adjacency matrices of good subgraphs.
    Without loss of generality, $A_I$ and $A_J$ overlap on the upper left $d\times d$ corner of $A_J$, which we call $U$.
    In order for $A_J\boldsymbol{1}_k \equiv r\boldsymbol{1}_k$, the upper right $d\times (k-d)$ submatrix of $A_J$, $M$, must complete the first $d$ rows of $A_J$ so that they each sum to $r \pmod{q}$.
    This event, which we call $E_{asym}$, occurs precisely when
    \[
    M\boldsymbol{1}_{k-d} = r\boldsymbol{1}_d - U\boldsymbol{1}_d
    \]
    By the symmetry of $A_J$, the submatrix $M$ determines the lower left $(k-d)\times d$ submatrix of $A_J$.
    The lower-right $(k-d)\times (k-d)$ submatrix, $L$, must then complete the bottom $k-d$ rows so that they each sum to $r\pmod{q}$.
    This event, which we call $E_{sym}$, occurs when
    \[
    L\boldsymbol{1}_{k-d} = r\boldsymbol{1}_d - M^T\boldsymbol{1}_d 
    \]
    This is illustrated in Figure \ref{fig:matrix}.\smallskip

    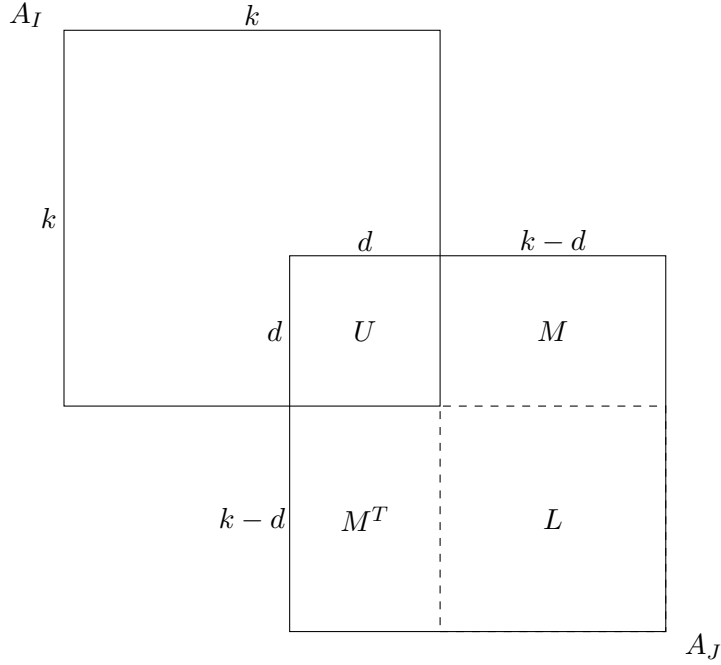
\begin{figure}[ht]
    \centering
    \begin{tikzpicture}
        \draw (0, 0) rectangle (5, 5);
        \draw (3, 2) rectangle (8, -3);
        \draw[dashed] (5, 0) rectangle (8, -3);

        \node[] at (-0.5, 5.2) {$A_I$};
        \node[] at (8.5, -3.2) {$A_J$};
        \node[] at (2.5, 5.2) {$k$};
        \node[] at (-0.2, 2.5) {$k$};
        \node[] at (4, 2.2) {$d$};
        \node[] at (4, 1) {$U$};
        \node[] at (6.5, 1) {$M$};
        \node[] at (2.8, 1) {$d$};
        \node[] at (6.5, 2.2) {$k-d$};
        \node[] at (2.5, -1.5) {$k-d$};
        \node[] at (4, -1.5) {$M^T$};
        \node[] at (6.5, -1.5) {$L$};
    \end{tikzpicture}
    \caption{Overlapping submatrices, $A_I$ and $A_J$, of $A(G)$.}
    \label{fig:matrix}
    \end{figure}

    We compute the covariance $\Cov(X_I, X_J)$ by conditioning on the event ``$A_I\boldsymbol{1}_k = r\boldsymbol{1}_k$'' and applying Corollary \ref{lem: non-symmetric} and Lemma \ref{lem: symmetric} as follows:
    \begin{equation}\label{covariance1}
    \begin{split}
        \Cov(X_I, X_J) &= \mathbb P[A_I\boldsymbol{1}_k  \equiv r\boldsymbol{1}_k,\ A_J\boldsymbol{1}_k \equiv  r\boldsymbol{1}_k] - \mathbb P[A_I\boldsymbol{1}_k \equiv r\boldsymbol{1}_k]\cdot \mathbb P[A_J\boldsymbol{1}_k \equiv r\boldsymbol{1}_k]\\
        &= \mathbb P[A_I\boldsymbol{1}_k \equiv r\boldsymbol{1}_k]\cdot \mathbb P[A_J\boldsymbol{1}_k \equiv r\boldsymbol{1}_k\mid A_I\boldsymbol{1}_k \equiv r\boldsymbol{1}_k] - \mathbb P[A_I\boldsymbol{1}_k \equiv r\boldsymbol{1}_k]^2\\
        &= \mathbb P[A_I\boldsymbol{1}_k \equiv r\boldsymbol{1}_k]\cdot \mathbb P[E_{asym}\mid A_I\boldsymbol{1}_k \equiv r\boldsymbol{1}_k]\cdot \mathbb P[E_{sym}\mid E_{asym},\ A_I\boldsymbol{1}_k \equiv r\boldsymbol{1}_k] - \mathbb P[A_I\boldsymbol{1}_k \equiv r\boldsymbol{1}_k]^2\\
    \end{split}
    \end{equation}
    By Lemma \ref{lem: symmetric}, $\Pr[A_I\boldsymbol 1_k \equiv r\boldsymbol 1_k] = \frac{1}{q^k}(1+e^{-\Omega(n)})$ when $q$ is odd and twice this when $q$ and $k$ are both even.
    Now for any value $r\boldsymbol 1_d - U\boldsymbol 1_d$ takes, $M\boldsymbol 1_{k-d}$ takes this same value with probability $1/q^d$ plus some uniformly small error by Corollary \ref{lem: non-symmetric}, so
    \[
        \mathbb P[E_{asym}\mid A_I\boldsymbol{1}_k \equiv r\boldsymbol{1}_k] = \frac{1}{q^d}\left(1 + e^{-\Omega(k-d)}\right)^d = \frac{1}{q^d}\left(1+e^{-\Omega(\log^2n)}\right)^d.
    \]
    Similarly, when $q$ is odd, $L\boldsymbol{1}_{k-d}$ takes all values in $\Z_q^{k-d}$ with probability $1/q^{k-d}$ plus some uniformly bounded error by Lemma \ref{lem: symmetric}, so
    \[
        \mathbb P[E_{sym}\mid E_{asym},\ A_I\boldsymbol{1}_k \equiv r\boldsymbol{1}_k] = \frac{1}{q^{k-d}}\left(1 + e^{-\Omega(k-d)}\right) = \frac{1}{q^{k-d}}\left(1 + e^{-\Omega(\log^2n)}\right).
    \]
    When $q$ is even, however, $L\boldsymbol{1}_{k-d}$ takes only the values in $\Z_q^{k-d}$ whose entrywise sums are even.
    In this case, we split the event $E_{sym}$ according to the parity of the entrywise sum of $r\boldsymbol 1_d - M^T\boldsymbol 1_d$.
    When this sum is even, $E_{sym}$ occurs with twice the above probability by Lemma \ref{lem: symmetric}, and it occurs with probability zero when the sum is odd.
    We then have (again, for odd $q$ -- make the aforementioned changes for even $q$)
    \begin{equation}\label{covariance2}
    \begin{split}
        \Cov(X_I, X_J)&= \frac{1}{q^k}\left(1+e^{-\Omega(n)}\right)\cdot \frac{1}{q^d}\left(1+e^{-\Omega(\log^2n)}\right)^d\cdot \frac{1}{q^{k-d}}\left(1+e^{-\Omega(\log^2n)}\right) - \frac{1}{q^{2k}}\left(1+e^{-\Omega(n)}\right)^2.
    \end{split}
    \end{equation}
    Since $k = \Theta(n)$ and $d \leq k - \log^2 n$, the above quantity is $\frac{1}{q^{2k}} \cdot o(1)$, from which we conclude
    \begin{equation}\label{1.1: small overlap bound}
    \sum_{(I,J)\in \mcal{B}^C}\Cov(X_I, X_J) \leq \binom{n}{k}^2 \cdot \frac{1}{q^{2k}}\cdot o(1) = o(\E[X_k]^2).
    \end{equation}
    
    Combining (\ref{1.1: variance bound}), (\ref{1.1: big overlap bound}) and (\ref{1.1: small overlap bound}), we see that $\Var[X_k] = o(\E[X_k])^2$, so with high probability $G$ contains a good subgraph of order $k$.
\end{proof}

\subsection{Proof of Theorem \ref{thm: one part distrubution}}

We slightly modify the second moment argument that we used to prove Theorem \ref{thm: one part}, again assuming that $q$ is odd for convenience.

\bigskip

\begin{proof}[Proof of Theorem \ref{thm: one part distrubution}]


    
    Given an integer $k$ and any $k_0, \ldots, k_{q-1}$ with $k_i\in \{\lceil \alpha_ik\rceil, \lfloor \alpha_ik\rfloor\}$ and $k_0 + \cdots + k_{q-1} = k$, let $S_{k_0, \ldots, k_{q-1}}$ be the set of vectors $v\in \Z_q^k$ having $k_i$ many entries congruent to $i \pmod{q}$ for all $i$.
    The size of $S_{k_0, \ldots, k_{q-1}}$ is given by the multinomial coefficient
    \[
        |S_{k_0, \ldots, k_{q-1}}| = \binom{k}{k_0, \ldots, k_{q-1}}.
    \]
    We use Lemma \ref{lem: symmetric} to estimate the probability that $M\boldsymbol{1}_k$ lies in $S_{k_0, \ldots, k_{q-1}}$, where $M$ is any $k\times k$ principal submatrix of $A(G)$:
    \[
    \mathbb{P}[M\boldsymbol{1}_k \in S_{k_0, \ldots, k_{q-1}}] = |S_{k_0, \ldots, k_{q-1}}|\left(\frac{1}{q^k} + \frac{o(1)}{q^k}\right),
    \]
    for odd $q$ and twice this when $q$ is even and $k$ and $\boldsymbol \alpha$ give rise to a valid degree sequence modulo $q$.
    The remainder of the proof is nearly identical to that of Theorem \ref{thm: one part}.
    If $X_k$ is the number of subgraphs of $G$ of order $k$ satisfying the conclusion of the theorem for this choice of $k_i$'s (``good'' subgraphs for short), then
    \[
    \mathbb{E}[X_k] = |S_{k_0, \ldots, k_{q-1}}|\binom{n}{k}\left(\frac{1}{q^k} + \frac{o(1)}{q^k}\right).
    \]
    Let $k'$ be the largest integer such that $|S_{k_0, \ldots, k_{q-1}}|\binom{n}{k}\frac{1}{q^{k}}\ge 1$ for all choices of the $k_i$ as described above, and let $\epsilon>0$ be arbitrarily small.
    A routine calculation  (see Appendix) shows that
    \begin{equation} \label{calc for appen}
        \mathbb{E}[X_{k'+\epsilon n}]=o(1) \textrm{ and } \mathbb{E}[X_{k'-\epsilon n}]=2^{f(\epsilon)n} \textrm{ for some function }f.
    \end{equation}
      In particular, by Markov's inequality we obtain that whp $X_{k'+\epsilon n}=0$. Now, let $k:=k'-\epsilon n$, and as in the proof of the previous theorem, we will show that $\Var[X_{k}] = o(\mathbb E[X_{k}]^2)$.
    As per (\ref{1.1: variance bound}), it suffices to show that
    \[
    \sum_{I\neq J\in \binom{[n]}{k}}\Cov(X_I, X_J) = o(\E[X_k]^2),
    \]
    where $X_I$ and $X_J$ are as they were in the proof of the previous theorem, except now a good subgraph is one whose distribution of degrees is given by $\boldsymbol \alpha$.
    Letting $\mcal{B}$ be as in (\ref{1.1: big overlap}), calculation (\ref{1.1: big overlap bound}) carries over exactly.
    Furthermore, calculations (\ref{covariance1}) and (\ref{covariance2}) carry over almost line for line since for any $k$,
    \[
    \begin{split}
    \mathbb{P}[A_I\boldsymbol{1}_k \in S_{k_0, \ldots, k_{q-1}},\ A_J\boldsymbol{1}_k\in S_{k_0, \ldots, k_{q-1}}] &= \sum_{\boldsymbol u,\boldsymbol v\in S_{k_0, \ldots, k_{q-1}}}\mathbb{P}[A_I\boldsymbol{1}_k = \boldsymbol u,\ A_J\boldsymbol{1}_k = \boldsymbol v]\\
    & \leq |S_{k_0, \ldots, k_{q-1}}|^2 \cdot \max_{\boldsymbol u, \boldsymbol v\in S_{k_0, \ldots, k_{q-1}}}\mathbb{P}[A_I\boldsymbol{1}_k = \boldsymbol u,\ A_J\boldsymbol{1}_k = \boldsymbol v].
    \end{split}
    \]
    In particular, $\sum_{(I,J)\in \mcal{B}^C}\Cov(X_I, X_J)$ is at most a factor of $|S_{k_0, \ldots, k_{q-1}}|^2$ larger than it was in the proof of Theorem \ref{thm: one part}.
    Since $\E[X_k]$ is exactly a factor of $|S_{k_0, \ldots, k_{q-1}}|$ larger than it was in Theorem \ref{thm: one part}, we still have that $\Var[X_k] = o(\E[X_k]^2)$ and the second moment argument goes through.
\end{proof}

\section{Packing}

We employ another second moment argument to prove Theorem \ref{thm: packing}.
Before getting into the proof we introduce some terminology. Suppose $V(G)=V_1\cup \ldots \cup V_t$ is a partition of the vertex set of a graph $G$, and $n_i\coloneqq|V_i|$ for all $i$.
Such a partition is \emph{balanced} if $n_i\in \{\lfloor n/t\rfloor,\lceil n/t\rceil\}$ for all $i$. For any fixed integers $q \geq 2$ and $0\leq r < q$, we call a partition $V(G) = V_1 \cup \ldots \cup V_t$ an $(r,q)$-partition if each vertex in $G[V_i]$ has degree congruent to $r\pmod q$ for all $i \leq t$.

Now let us assume for simplicity that $q$ is odd (the case of $q=2$ was settled by Scott in \cite{scott2001odd}; for other even $q$, whenever we use Lemma \ref{lem: symmetric} one needs to multiply the estimate by $2$) and let $t=q+1$.
Let $X$ be the number of balanced $(r, q)$-partitions of the vertex set of $G:= G_{n,1/2}$ into $t$ parts $V_1, \ldots, V_t$.
By Lemma \ref{lem: symmetric}, for every $1\leq i\leq t$, the probability that all the vertices in the induced subgraph $G[V_i]$ have degree $r\pmod{q}$ is $\frac{1}{q^{n_i}}(1+o(1))$. We then have
\begin{eqnarray*}
    \E[X] &=& \binom{n}{n_1, \ldots, n_t}\cdot \frac{(1 + o(1))^t}{q^n}\\
    &=&\frac{n!}{\prod_{i=1}^tn_i!}\cdot \frac{1+o(1)}{q^n}\\
    &\ge& n^{-O(1)}\frac{n^n}{(n/t)^n}\cdot \frac{1}{q^n}\\
    &=&n^{-O(1)}\left(\frac{q+1}{q}\right)^n=e^{\Theta(n/q)},
\end{eqnarray*}
where the second to last equality holds by Stirling's approximation.

In particular we have that the expected number of $(r, q)$-partitions with $t$ parts is exponentially large in $n$ (assuming that $q$ is fixed).
Note that taking $t = q+1 > q$ is critical in achieving this exponential bound, and for smaller values of $t$ the expectation has a smaller order of magnitude.
Indeed, as Balister, Powierski, Scott and Tan show in \cite{scott2021}, the number of balanced $(r,q)$-partitions of $G_{n, 1/2}$ with $q$ parts is distributed like a Poisson random variable (for $q>2$ at least; the $q=2$ case is more nuanced, but there is still no ``with high probability'' statement).

Now we need to show that $\Var[X] = o(\E[X]^2)$ and then apply Chebyshev's inequality.
To this end, let $\mcal{P}_t$ be the set of all balanced partitions of $V(G)$ into $t$ parts.
Specifically, an element $\mathcal U$ of $\mathcal P_t$ is a collection of $t$ subsets of $V(G)$, $U_1, \ldots, U_t$, that form a balanced partition.
We also let $X_{\mathcal U}$ denote the random variable indicating whether or not the partition $\mathcal U \in \mathcal P_t$ is an $(r,q)$-partition.
Using this notation, we write $X = \sum_{\mathcal U\in \mathcal P_t} X_{\mathcal U}$ and our goal becomes estimating $\sum_{\mathcal U, \mathcal V\in \mathcal P_t}\Cov(X_{\mathcal U}, X_{\mathcal V})$.

We will achieve this goal by defining a collection, $\mathcal T_t$, of pairs of partitions $(\mathcal U, \mathcal V)$ that behave more or less independently of one another, i.e. the probability that both are simultaneously $(r,q)$-partitions is around $q^{-2n}$ (and hence their covariance is small).
We will show that this independent behavior is typical, in the sense that most pairs behave this way.
Then we split the sum to be estimated,
\[
    \sum_{\mcal{U}, \mcal{V}\in \mcal{P}_t}\Cov(X_{\mcal{U}}, X_{\mcal{V}}) = \sum_{(\mcal{U}, \mcal{V})\in \mcal{T}_t}\Cov(X_{\mcal{U}}, X_{\mcal{V}}) + \sum_{(\mcal{U}, \mcal{V})\in \mcal{T}_t^C}\Cov(X_{\mcal{U}}, X_{\mcal{V}}).
\]
The first sum is small compared to $\E[X]^2$ because $\Cov(X_{\mathcal U}, X_{ \mathcal V})$ is small for typical pairs $(\mathcal U, \mathcal V)$, which make up the majority of all pairs.
While $\Cov(X_{\mathcal U}, X_{\mathcal V})$ might be larger for atypical pairs, a negligible proportion of all pairs behave this way.

We expect two partitions $\mathcal U, \mathcal V$ to behave (nearly) independently if their parts are all ``significantly different'' from each other.
This intuition motivates the following definition.

\begin{definition}
A pair $(\mathcal U, \mathcal V)\in \mathcal P_t^2$ is called \emph{typical} if for all $i$ and $j$ we have $|U_i\cap V_j| \leq n/3t$.
We call all other pairs \emph{atypical}.
The set $\mathcal T_t$ consists exactly of all typical pairs in $\mathcal P_t^2$.
\end{definition}
Note that in any typical pair of partitions $(\mathcal U, \mathcal V)$, we have that each $U_i$ intersects at least three parts of $\mathcal V$ in at least, say, $\log^2n$ vertices (and the same is true if we reverse the roles of $\mathcal U$ and $\mathcal V$).
Now we show that this definition ensures the desired behavior.

\begin{claim}\label{typical cov}
    If $(\mathcal{U}, \mathcal{V})\in \mathcal T_t$, then $\Cov(X_{\mathcal U}, X_{\mathcal{V}}) = o(1)/q^{2n}$.
\end{claim}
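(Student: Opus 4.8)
The plan is to condition on the random variables $X_{\mathcal U}$ and $X_{\mathcal V}$ in a structured order, exposing the edges of $G$ in batches so that Corollary \ref{lem: non-symmetric} and Lemma \ref{lem: symmetric} apply at each stage. Write $\Cov(X_{\mathcal U}, X_{\mathcal V}) = \mathbb P[X_{\mathcal U}=1,\, X_{\mathcal V}=1] - \mathbb P[X_{\mathcal U}=1]\cdot \mathbb P[X_{\mathcal V}=1]$. By Lemma \ref{lem: symmetric}, $\mathbb P[X_{\mathcal U}=1] = \prod_{i=1}^t \frac{1}{q^{n_i}}(1 + e^{-\Omega(n)}) = \frac{1}{q^n}(1+o(1))$ and likewise for $\mathcal V$, so it suffices to show $\mathbb P[X_{\mathcal U}=1,\, X_{\mathcal V}=1] = \frac{1}{q^{2n}}(1+o(1))$. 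First I would expose the edges lying inside the parts $U_1, \ldots, U_t$ of $\mathcal U$, i.e. all pairs $\{x,y\}$ with $x,y$ in a common $U_i$; conditioned on $X_{\mathcal U}=1$, every vertex $v$ has a partial degree $d_{\mathcal U}(v) \pmod q$ already determined by these edges, and $X_{\mathcal U}=1$ forces each such partial degree to equal $r$ (this is exactly the event handled by Lemma \ref{lem: symmetric}, applied block by block). The remaining, as-yet-unexposed edges are precisely the pairs $\{x,y\}$ that are split by $\mathcal U$ (in different $U_i$'s). Among these, the event $X_{\mathcal V}=1$ requires, for each $j$ and each $v\in V_j$, that the number of neighbors of $v$ inside $V_j$ be $\equiv r \pmod q$.

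The key structural point is the typicality hypothesis: since $|U_i \cap V_j| \le n/3t$ for all $i,j$, each part $V_j$ meets at least three parts of $\mathcal U$ in $\ge \log^2 n$ vertices, so for every vertex $v \in V_j$ the set $N_{\mathcal V\text{-split}}(v) := \{w \in V_j : w \text{ in a different } \mathcal U\text{-part than } v\}$ has size $\ge \log^2 n$ — these are exactly the neighbors of $v$ within $V_j$ whose edge-status is still unexposed. So, conditioned on everything revealed so far, the within-$V_j$ degree of $v$ modulo $q$ is its already-determined partial within-$V_j$ degree plus a sum of $\ge \log^2 n$ independent $\Bern(1/2)$'s, one for each vertex in $N_{\mathcal V\text{-split}}(v)$. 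The next step is to organize the vertices by their $(\mathcal U, \mathcal V)$-cell $U_i \cap V_j$ and expose the split edges cell-pair by cell-pair, so that at each stage the relevant constraint matrix is a fresh rectangular $\Bern(1/2)$ matrix of dimensions $|U_i\cap V_j| \times (\text{large})$ with the long dimension $\ge \log^2 n = \omega(\log n)$; Corollary \ref{lem: non-symmetric} then says each such constraint is satisfied with probability $q^{-|U_i\cap V_j|}(1 + o(1))$, uniformly over the target vector (which depends on the history). One must be slightly careful with the very last cell-pair where symmetry of $A(G)$ links a block with its transpose, exactly as in the $E_{sym}$ step of the proof of Theorem \ref{thm: one part}; there Lemma \ref{lem: symmetric} is used in place of Corollary \ref{lem: non-symmetric}, introducing only an $e^{-\Omega(\log^2 n)}$ multiplicative error (and, for even $q$, a harmless factor of $2$ absorbed into the even/odd parity bookkeeping). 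Multiplying the per-cell probabilities, the exponents $|U_i \cap V_j|$ sum to $n$ (each split pair is counted once) and the multiplicative errors $(1+o(1))$ accumulate over $O(t^2) = O(1)$ stages, giving $\mathbb P[X_{\mathcal V}=1 \mid X_{\mathcal U}=1] = q^{-n}(1+o(1))$.

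Combining, $\mathbb P[X_{\mathcal U}=1,\, X_{\mathcal V}=1] = \mathbb P[X_{\mathcal U}=1]\cdot \mathbb P[X_{\mathcal V}=1 \mid X_{\mathcal U}=1] = \frac{1}{q^{2n}}(1+o(1))$, whence $\Cov(X_{\mathcal U}, X_{\mathcal V}) = \frac{1}{q^{2n}}(1+o(1)) - \frac{1}{q^{2n}}(1+o(1)) = \frac{o(1)}{q^{2n}}$, as claimed. The main obstacle — and the place requiring genuine care rather than bookkeeping — is verifying that the conditioning can actually be arranged so that at every stage the "long side" of the constraint matrix is $\omega(\log n)$ and the matrix entries are genuinely fresh independent $\Bern(1/2)$'s; this is where the typicality bound $|U_i\cap V_j|\le n/3t$ is essential (it guarantees every vertex has $\ge \log^2 n$ still-free within-$V_j$ neighbors), and where one has to check that processing the cell-pairs in a suitable order (say, lexicographic in $(i,j)$) never leaves a vertex with too few free neighbors for the relevant lemma to apply. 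Once the exposure order is fixed, the rest is a routine product of $(1+o(1))$ factors over boundedly many stages.
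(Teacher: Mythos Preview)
Your outline differs from the paper's proof: the paper does not iterate Corollary~\ref{lem: non-symmetric} cell by cell, but instead splits each $U_i$ into just two pieces, $U_i\cap V_{j_i}$ and $U_i\setminus V_{j_i}$, and applies Lemma~\ref{lem: both ways} once to the crossing rectangle, after first checking a ``compatibility'' congruence via Lemma~\ref{lem: one row}. Your cell-by-cell scheme could in principle be made to work, but as written it has a genuine gap at the final step.

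The problem is the ``very last cell-pair.'' After you have processed cells $C_1,\ldots,C_{t-2}$ of $V_j$ using Corollary~\ref{lem: non-symmetric}, the only remaining unrevealed edges inside $V_j$ form the bipartite block $M$ between $C_{t-1}$ and $C_t$, and you need \emph{both} $M\boldsymbol 1\equiv\boldsymbol u$ (for the $C_{t-1}$ vertices) and $\boldsymbol 1^T M\equiv\boldsymbol v^T$ (for the $C_t$ vertices). This is not the situation of Lemma~\ref{lem: symmetric}; it is precisely Lemma~\ref{lem: both ways}. Your analogy with $E_{sym}$ from Theorem~\ref{thm: one part} breaks down because there the leftover block $L$ is a genuine symmetric principal submatrix of $A(G)$, whereas your leftover block is rectangular. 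Moreover, Lemma~\ref{lem: both ways} has a compatibility hypothesis $\sum_i u_i\equiv\sum_j v_j$, which in your setup becomes a congruence involving $e(C_{t-1},C_{\le t-2})$ and $e(C_t,C_{\le t-2})$ --- quantities you have already exposed and conditioned upon through the row-sum constraints for $C_1,\ldots,C_{t-2}$. You need this compatibility to hold with probability $(1+o(1))/q$ given all prior conditioning, and that is not obvious from your exposure order; the paper arranges its stages specifically so that enough fresh randomness (at least $\Omega(\log^4 n)$ new $\Bern(1/2)$ edges) is available at the compatibility step to invoke Lemma~\ref{lem: one row}. Finally, a lexicographic order in $(i,j)$ gives you no control over the sizes of the last two cells, so Lemma~\ref{lem: both ways} may not even apply; you would have to reorder so that two cells of size $\ge\log^2 n$ come last, which typicality does guarantee exist.
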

\begin{proof}
    Let $(\mathcal{U}, \mathcal{V})\in \mathcal T_t$ and write $\mathcal{U}=\{U_1, \ldots, U_t\}$ and $\mathcal{V}= \{V_1, \ldots V_t\}$.
    We reveal portions of the subgraphs $G[U_i]$ in stages.
    In the first stage, depicted in Figure \ref{fig: packing} (a), reveal the edges in $G[U_i\cap V_j]$ for all $1\leq i,j\leq t$.
    If we look at the adjacency matrix $A(G[U_i])$ for some $i$, then we have revealed a sequence of block submatrices along its diagonal.
    As $\mathcal U$ and $\mathcal V$ are typical partitions, each $U_i$ intersects at least three parts of $\mathcal V$ in at least $\log^2n$ vertices.
    This allows us to choose, for each $i$, an index $j_i$ such that $|U_i\cap V_{j_i}|\geq \log^2n$ (note that the $j_i$'s are not necessarily distinct indices).
    We arrange the vertices in $G$ so that, for each $i$, the induced subgraph $G[U_i\cap V_{j_i}]$ corresponds to the bottom-right corner of the matrix $A(G[U_i])$.
   
    In the second stage, reveal, for each $i$, the remaining edges in $G[U_i\setminus V_{j_i}]$, i.e. those edges in $G[U_i]$ not incident to any vertex in $V_{j_i}$.
    This reveals the upper-left corner of $A(G[U_i])$, as shown in Figure \ref{fig: packing} (b).
    The still unrevealed edges in $G[U_i]$ correspond to those that cross between $V_{j_i}$ and $U_i\setminus V_{j_i}$.
    Call the portion of $A(G[U_i])$ corresponding to these vertices, the upper-right corner, $M_i$ and let its size be $s_i \times t_i$.
    In order for $G[U_i]$ to have all degrees congruent to $r\pmod q$, we must have
    \begin{equation}\label{necessary}
        M_i\boldsymbol 1_{t_i} \equiv r\cdot \boldsymbol{1}_{s_i} - A(G[U_i\setminus V_{j_i}])\boldsymbol{1}_{s_i},\qquad\text{and}\qquad \boldsymbol{1}_{s_i}^TM_i \equiv (r\boldsymbol{1}_{t_i}-A(G[U_i\cap V_{j_i}])\boldsymbol{1}_{t_i})^T,
    \end{equation}
    which gives us the following necessary condition on $A(G[U_i\cap V_{j_i}])$ and $A(G[U_i\setminus V_{j_i}])$ after the second stage of revealing entries:
    \begin{equation}\label{compatibility}
        r\cdot |U_i\setminus V_{j_i}| - \boldsymbol{1}_{s_i}^TA(G[U_i\setminus V_{j_i}])\boldsymbol{1}_{s_i} \equiv r\cdot |U_i\cap V_{j_i}| - \boldsymbol{1}_{t_i}^TA(G[U_i\cap V_{j_i}])\boldsymbol{1}_{t_i}.
    \end{equation}
    We say that the subgraphs $G[U_i\cap V_{j_i}]$ and $G[U_i\setminus V_{j_i}]$ are \emph{compatible} in $G[U_i]$ when congruence (\ref{compatibility}) holds, and Lemma \ref{lem: both ways}, applied to $M_i$ and (\ref{necessary}), gives the probability that $G[U_i]$ has all degrees congruent to $r\pmod q$ given this compatibility.

    Note that the compatibility condition (\ref{compatibility}) depends only on the numbers of edges in $G[U_i\cap V_{j_i}]$ and $G[U_i\setminus V_{j_i}]$ modulo $q$.
    Since $(\mathcal U, \mathcal V)$ is typical, $U_i$ intersects at least two other parts of $\mathcal V$ other than $V_{j_i}$ in at least $\log^2n$ vertices, so the number of entries revealed in the second stage is $\Omega(\log^4n)$.
    By Lemma \ref{lem: one row}, the probability that $G[U_i\cap V_{j_i}]$ and $G[U_i\setminus V_{j_i}]$ are compatible in $G[U_i]$ is $\frac{1+o(1)}{q}$.
    
    In the third and final stage, shown in Figure \ref{fig: packing} (c), we reveal the remaining edges in $G[U_i]$, which are those that cross between $U_i\cap V_{j_i}$ and $U_i\setminus V_{j_i}$ and correspond to the matrix $M_i$.
    We condition on $G[U_i\cap V_{j_i}]$ and $G[U_i\setminus V_{j_i}]$ being compatible in $G[U_i]$, and apply Lemma \ref{lem: both ways} to conclude that this revelation produces a good subgraph with probability $\frac{1+o(1)}{q^{|U_i| -1 }}$.
    Each revealing step in $U_i$ is independent of the revealing steps in $U_{i'}$ for distinct $i,i'$, so each $G[U_i]$ is good independently with probability $\frac{1+o(1)}{q^{|U_i|}}$.
    
    \begin{figure}
    \centering
    \subfigure[{Stage 1: reveal $G[U_i \cap V_j]$ for all $i$ and $j$.}]{
        \scalebox{0.55}{
            \begin{tikzpicture}
                \node[] at (6.5, 14.5) {\huge $U_i$};
                \draw (0, 0)  rectangle (14, 14);
                \draw[fill=gray] (0, 14) rectangle (1, 13);
                \draw[fill=gray] (1, 13) rectangle (2, 12);
        
                \node[] at (2.5, 11.5) {\huge $\ddots$};
        
                \draw[fill=gray] (3, 11) rectangle (4, 10);
                \draw[fill=gray] (4, 10) rectangle (7, 7);
                \draw[fill=gray] (7, 7)  rectangle (10, 4);
        
                \node[] at (10.5, 3.5) {\huge $\ddots$};
        
                \draw[fill=gray] (11, 3) rectangle (14, 0);
        
                \node[] at (12.5, 1.5) {\huge $U_i \cap V_{j_i}$};
                
            \end{tikzpicture}
        }
    }
    \subfigure[{Stage 2: reveal the rest of $G[U_i \setminus V_{j_i}]$.}]{
        \scalebox{0.55}{
            \begin{tikzpicture}
                \node[] at (6.5, 14.5) {\huge $U_i$};
                \draw (0, 0)  rectangle (14, 14);
                \draw[fill=gray] (0, 14) rectangle (11, 3);
                \draw[fill=gray!30] (0, 14) rectangle (1, 13);
                \draw[fill=gray!30] (1, 13) rectangle (2, 12);
        
                \node[] at (2.5, 11.5) {\huge $\ddots$};
        
                \draw[fill=gray!30] (3, 11) rectangle (4, 10);
                \draw[fill=gray!30] (4, 10) rectangle (7, 7);
                \draw[fill=gray!30] (7, 7)  rectangle (10, 4);
        
                \node[] at (10.5, 3.5) {\huge $\ddots$};
        
                \draw[fill=gray!30] (11, 3) rectangle (14, 0);
        
                \node[] at (12.5, 1.5) {\huge $U_i \cap V_{j_i}$};
                
            \end{tikzpicture}
        }
    }

    \subfigure[{Stage 3: reveal the rest of $G[U_i]$.}]{
        \scalebox{0.6}{
            \begin{tikzpicture}
                \node[] at (6.5, 14.5) {\huge $U_i$};
                \draw (0, 0)  rectangle (14, 14);
                \draw[fill=gray!30] (0, 14) rectangle (11, 3);
                \draw[fill=gray!30] (0, 14) rectangle (1, 13);
                \draw[fill=gray!30] (1, 13) rectangle (2, 12);
        
                \node[] at (2.5, 11.5) {\huge $\ddots$};
        
                \draw[fill=gray!30] (3, 11) rectangle (4, 10);
                \draw[fill=gray!30] (4, 10) rectangle (7, 7);
                \draw[fill=gray!30] (7, 7)  rectangle (10, 4);
        
                \node[] at (10.5, 3.5) {\huge $\ddots$};
        
                \draw[fill=gray!30] (11, 3) rectangle (14, 0);
        
                \node[] at (12.5, 1.5) {\huge $U_i \cap V_{j_i}$};
                
                \draw[fill=gray] (0, 0) rectangle (11, 3);
                \draw[fill=gray] (11, 3) rectangle (14, 14);
            \end{tikzpicture}
        }
    }
    \caption{The adjacency matrix of $G[U_i]$ when performing the three revealing stages.}
    \label{fig: packing}
    \end{figure}
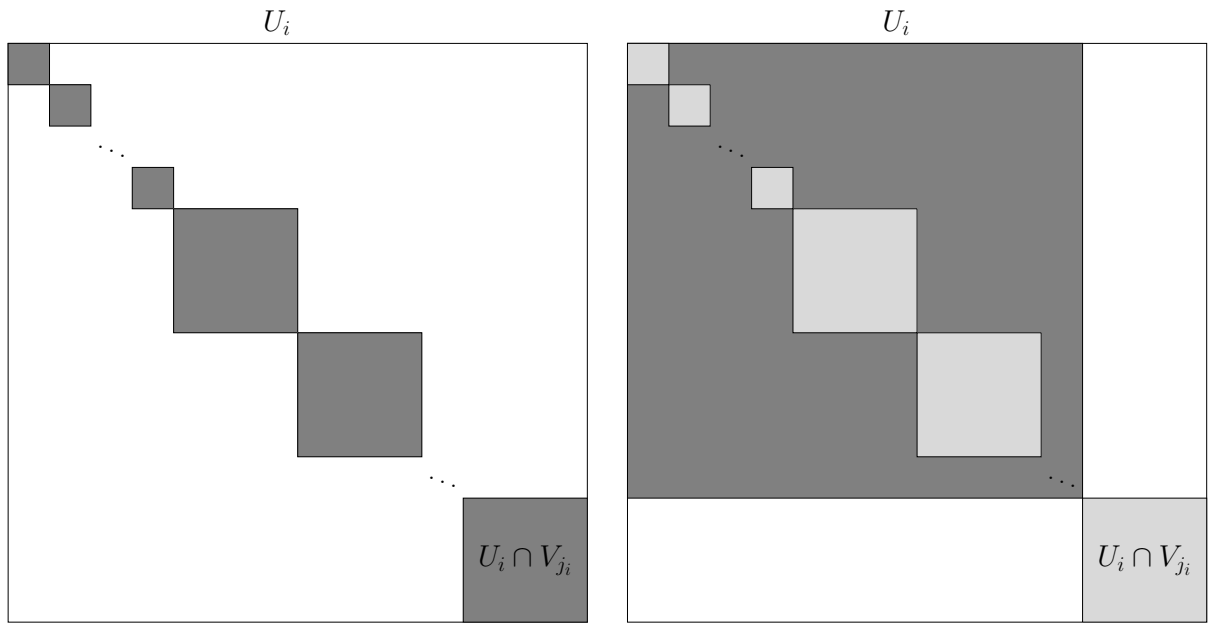
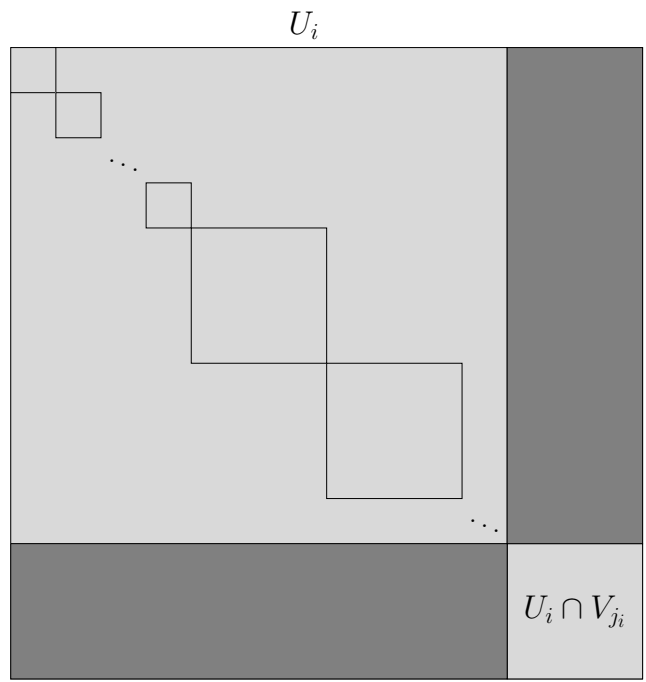
    
    We repeat the three revealing stages for partition $\mathcal V$.
    Since $\mathcal V$ is typical, each $V_i$ intersects at least three parts of $\mathcal U$ in at least $\log^2n$ vertices and for each $i$, we choose an index $j_i$ such that $|V_i\cap U_{j_i}| \geq \log^2n$.
    The first stage would have us reveal $G[V_i \cap U_j]$ for all $i$ and $j$, but we have already revealed these subgraphs.
    In the second stage we reveal, for each $i$, the edges in $G[V_i]$ not incident to any vertex in $U_{j_i}$.
    Each such edge crosses between $G[V_i \cap U_j]$ and $G[V_i\cap U_{j'}]$ for some $j\neq j'$.
    Prior to this step we had only revealed the edges in the induced subgraphs $G[U_i]$ for all $i$, so no such crossing was revealed and we have enough randomness to apply Lemma \ref{lem: one row} for the compatibility condition.
    Likewise, in stage three we reveal the edges that cross between $G[V_i \cap U_{j_i}]$ and $G[V_i \setminus U_{j_i}]$, all of which were previously unrevealed by the same argument.
    We conclude that $\mathcal U$ and $\mathcal V$ are both $(r,q)$-partitions with probability $\frac{1+o(1)}{q^{2n}}$.
\end{proof}

Next, we show that the atypical pairs make up an exponentially small fraction of $\mathcal P_t^2$.
For ease of notation, we let $M(n,t)$ denote the number of balanced partitions of $V(G)$ into $t$ sets, i.e. $M(n,t)\coloneqq \binom{n}{n_1, \ldots, n_t}$, where $n_i\in \{\lfloor n/t\rfloor, \lceil n/t\rceil\}$.
Note that this quantity is indeed well defined since the $n_i$ are unique up to relabelling.

\begin{claim}\label{atyp size}
    The number of atypical pairs of partitions, $|\mathcal T_t^c|$, is at most $M(n,t)^2\cdot e^{-\epsilon n}$ for some constant $\epsilon$ that depends only on $q$.
\end{claim}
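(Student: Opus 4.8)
The plan is to bound the number of atypical pairs by fixing one partition $\mathcal U$ and counting the partitions $\mathcal V$ that are "bad" for it, namely those for which some intersection $|U_i \cap V_j|$ exceeds $n/3t$. First I would fix a balanced partition $\mathcal U = \{U_1,\dots,U_t\}$; there are $M(n,t)$ choices. For a given $\mathcal U$, a partition $\mathcal V$ is bad precisely when there exist indices $i,j$ with $|U_i\cap V_j| > n/3t$. I would union-bound over the (at most $t^2$) choices of the pair $(i,j)$, so it suffices to show that for each fixed $(i,j)$ the number of balanced $\mathcal V$ with $|U_i\cap V_j| > n/3t$ is at most $M(n,t)\cdot e^{-\epsilon' n}$ for some $\epsilon' = \epsilon'(q) > 0$; absorbing the factor $t^2 = (q+1)^2$ into the exponent gives the claim with a slightly smaller $\epsilon$.

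The key step is the entropy/counting estimate for a single fixed $(i,j)$. Building a balanced $\mathcal V$ amounts to partitioning $V(G)$ into parts of sizes $n_1,\dots,n_t$ (a multinomial count), but the constraint $|U_i\cap V_j| > n/3t$ forces an unusually large overlap between the fixed set $U_i$ (of size $\approx n/t$) and the part $V_j$. Since $\mathcal V$ is balanced, $|V_j| \approx n/t$, so in an "unconstrained" balanced partition the expected size of $U_i\cap V_j$ would be about $|U_i|\cdot|V_j|/n \approx n/t^2$, which is a factor of $t$ smaller than the forbidden threshold $n/3t$. Thus we are demanding a large deviation: the number of balanced partitions $\mathcal V$ with $|U_i\cap V_j|\ge n/3t$ can be written as a sum over the choice of which elements of $U_i$ land in $V_j$, and is dominated by a hypergeometric-type tail. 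Concretely, the number of ways to choose $V_j$ alone with $|V_j\cap U_i|\ge n/3t$, divided by the number of ways to choose an unconstrained $V_j$ of the same size, is at most $\mathbb P[\mathrm{Hyp} \ge n/3t]$ where the hypergeometric random variable has mean $\approx n/t^2$; by Chernoff's inequality for the hypergeometric distribution (Remark \ref{CheHyper}, since $n/3t \ge (1+a)\cdot n/t^2$ with $a = t/3 - 1 = \Omega(1)$ a constant depending only on $q$), this probability is $e^{-\Omega(n)}$ with the implied constant depending only on $q$. The remaining parts of $\mathcal V$ can then be completed in at most $M(n,t)$ ways (crudely bounding by the total number of balanced partitions), so the count of bad $\mathcal V$ for this $(i,j)$ is at most $M(n,t)\cdot e^{-\epsilon' n}$.

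Putting it together: $|\mathcal T_t^c| \le M(n,t)\cdot t^2 \cdot M(n,t)\cdot e^{-\epsilon' n} = M(n,t)^2\cdot (q+1)^2 e^{-\epsilon' n}$, which is at most $M(n,t)^2 e^{-\epsilon n}$ for any $\epsilon < \epsilon'$ once $n$ is large, with $\epsilon$ depending only on $q$. The main obstacle — really the only delicate point — is making the hypergeometric large-deviation step clean: one must be careful that the "conditioning" on $|V_j\cap U_i|$ being large genuinely costs an $e^{-\Omega(n)}$ factor relative to the full multinomial count, which is why it is cleanest to isolate the choice of a single part $V_j$ (giving an exact hypergeometric tail) rather than reasoning about all parts at once, and to note that the gap between $n/3t$ and the mean $n/t^2$ is a constant multiplicative factor bounded away from $1$ precisely because $t = q+1 \ge 3$.
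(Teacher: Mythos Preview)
Your approach is essentially identical to the paper's: fix $\mathcal U$, view $\mathcal V$ as uniformly random over $\mathcal P_t$, note that $|U_i\cap V_j|$ is hypergeometric with mean $\approx n/t^2$, apply the hypergeometric Chernoff bound (Remark~\ref{CheHyper}) and a union bound over the $t^2$ pairs $(i,j)$, then multiply by $M(n,t)^2$. One small slip: your closing remark that the multiplicative gap between $n/(3t)$ and the mean $n/t^2$ is bounded away from $1$ ``precisely because $t=q+1\ge 3$'' is off by one---for $t=3$ these two quantities coincide, so you need $t\ge 4$, which holds here since the paper is working under the standing assumption $q\ge 3$ (the case $q=2$ having been handled separately).
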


\begin{proof}
    Fix a partition $\mathcal U$ and sample $\mathcal V$ uniformly at random from $\mathcal P_t$.
    For any $i$ and $j$, the quantity $|U_i\cap V_j|$ is hypergeometrically distributed with mean $n/t^2$.
    Since $q \geq 3$, a simple application of Remark \ref{CheHyper} shows that the probability that there exist $U_i$ and $V_j$ such that $|U_i\cap V_j| \geq \frac{|U_i|}{3}$ is at most $t^2e^{-\Theta(n)}$, where the implicit constant depends only on $q$.
    Since there are at most $M(n,t)^2$ pairs of balanced partitions, the claim follows.
\end{proof}

Now split the atypical partitions into $\mathcal T_t^c = \mathcal B_1\cup \mathcal B_2$, where $\mathcal B_1$ is the set of atypical pairs where for for all $i$ and $j$ we have $|U_i\setminus V_j|, |V_j\setminus U_i| \geq \log^2n$.
In other words, $\mathcal B_1$ represents the pairs of atypical partitions where no symmetric difference $U_i \triangle V_j$ is too small.
Let us estimate how much these contribute to the covariance.

\begin{claim}\label{claim: B1}
    If $(\mathcal U, \mathcal V)\in \mathcal B_1$, then
    \[
    \mathbb P[\mathcal U \text{ and }\mathcal V\text{ are both }(r,q)\text{-partitions}] \leq (1+o(1))\cdot \frac{1}{q^{2n}}\cdot q^{2t}.
    \]
\end{claim}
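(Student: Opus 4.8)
The plan is to mimic the three-stage revealing argument from the proof of Claim~\ref{typical cov}, but now exploiting the weaker structural guarantee that defines $\mathcal B_1$. For a pair $(\mathcal U,\mathcal V)\in\mathcal B_1$, the key point is that for every $i$ we can still find an index $j_i$ with $|U_i\cap V_{j_i}|\ge \log^2 n$, simply because $U_i$ is partitioned by the $V_j$'s into $t$ pieces, so the largest piece has size at least $|U_i|/t=\Omega(n)$. Moreover, being atypical does not prevent $|U_i\setminus V_{j_i}|$ from being large: the defining condition of $\mathcal B_1$ is precisely that $|U_i\setminus V_{j_i}|\ge\log^2 n$ for all $i,j_i$. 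So for each $i$ we again split $A(G[U_i])$ into the block $A(G[U_i\cap V_{j_i}])$, the block $A(G[U_i\setminus V_{j_i}])$, and the off-diagonal block $M_i$ of size $s_i\times t_i$ with $s_i,t_i\ge\log^2 n$, and the constraint that $G[U_i]$ have all degrees $\equiv r\pmod q$ is governed by~\eqref{necessary}.

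First I would reveal, for each $i$, the two diagonal blocks $A(G[U_i\cap V_{j_i}])$ and $A(G[U_i\setminus V_{j_i}])$; since both blocks have $\Omega(\log^4 n)$ entries, Lemma~\ref{lem: one row} gives that the compatibility condition~\eqref{compatibility} holds with probability $\frac{1+o(1)}{q}$. Then I would reveal $M_i$ and apply Lemma~\ref{lem: both ways} (with the roles $s=\min(s_i,t_i)$, $t=\max(s_i,t_i)$, noting $\min(s_i,t_i)\ge\log^2 n=\omega(\log n)=\omega(\log\max(s_i,t_i))$), conditioned on compatibility, to get that $G[U_i]$ is good with probability $\frac{1+o(1)}{q^{|U_i|-1}}$; combining with the compatibility probability, $G[U_i]$ is good with probability $\frac{1+o(1)}{q^{|U_i|}}$, independently across $i$ since the blocks involve disjoint edge sets. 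This handles $\mathcal U$ and contributes a factor $\prod_i\frac{1+o(1)}{q^{|U_i|}}=\frac{(1+o(1))^t}{q^n}=\frac{1+o(1)}{q^n}$ (absorbing the bounded number $t$ of error factors), but note the product of the $q^{-1}$ compatibility savings and the $q^{-(|U_i|-1)}$ terms is where the extra $q^{t}$ slack appears: more carefully, each of the $t$ parts gains at most a factor $q$ of slack over the naive $q^{-|U_i|}$, so overall at most $q^{2t}$ across both partitions.

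Next I would run the same three stages for $\mathcal V$. The first stage for $\mathcal V$ asks us to reveal $G[V_i\cap U_j]$ for all $i,j$, which is already done. For the second and third stages I pick, for each $i$, an index $j_i'$ with $|V_i\cap U_{j_i'}|\ge\log^2 n$, which exists for the same counting reason, and with $|V_i\setminus U_{j_i'}|\ge\log^2 n$ by the $\mathcal B_1$ hypothesis. The crucial independence observation is exactly as in Claim~\ref{typical cov}: before processing $\mathcal V$ we have revealed only the edges inside the subgraphs $G[U_i]$, so any edge crossing between two distinct parts $U_a,U_b$ is still unrevealed; every edge touched in stages~2 and~3 for $\mathcal V$ is of this crossing type, so Lemma~\ref{lem: one row} and Lemma~\ref{lem: both ways} apply with fresh randomness. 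This yields $\mathbb P[\mathcal V\text{ good}\mid\mathcal U\text{ good}]\le\frac{1+o(1)}{q^n}\cdot q^{t}$, and multiplying the two bounds gives $\mathbb P[\mathcal U,\mathcal V\text{ both }(r,q)\text{-partitions}]\le(1+o(1))\cdot q^{-2n}\cdot q^{2t}$, as claimed.

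The main obstacle, and the place to be careful, is the bookkeeping of where the factor $q^{2t}$ comes from and confirming it is an \emph{upper} bound: one must check that the compatibility step and the conditional Lemma~\ref{lem: both ways} step combine to at most $q^{-(|U_i|-1)}\cdot q^{-1}\cdot(1+o(1))=q^{-|U_i|}(1+o(1))$ when $G[U_i]$ genuinely has enough randomness, but in degenerate sub-cases (for instance if some $U_i$ happens to meet only two parts of $\mathcal V$ nontrivially, or a symmetric-difference block is barely of size $\log^2 n$) the savings may be only partial, costing up to a factor $q$ per part and hence $q^{2t}$ in total over the $2t$ parts of the two partitions. One also has to verify the hypothesis $\min(s_i,t_i)=\omega(\log\max(s_i,t_i))$ needed for Lemma~\ref{lem: both ways}; since both $s_i$ and $t_i$ are at least $\log^2 n$ and at most $n$, this holds. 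Everything else is a direct transcription of the argument already carried out for typical pairs.
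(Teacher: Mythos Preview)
Your approach is essentially the paper's: the same staged edge-revealing scheme, the same application of Lemma~\ref{lem: both ways} to the off-diagonal block $M_i$, and the same independence argument when passing from $\mathcal U$ to $\mathcal V$. The paper is cleaner in one respect: rather than first arguing that compatibility holds with probability $(1+o(1))/q$ and then hedging, it simply bounds the compatibility probability by~$1$ for every part of both partitions, which directly gives $(1+o(1))q^{-(|U_i|-1)}$ per part from Lemma~\ref{lem: both ways} and hence the factor $q^{2t}$ over all $2t$ parts with no case analysis. Your assertion that Lemma~\ref{lem: one row} applies ``with fresh randomness'' to compatibility in the $\mathcal V$ pass is not always justified --- for instance, if $V_i\setminus U_{j_i'}$ lies entirely inside a single $U_j$, stage~2 for $V_i$ reveals no new edges and compatibility is deterministic given what has already been exposed --- but your explicit hedge of a factor $q$ per part is exactly the paper's bound-by-$1$, so the final inequality is correct.
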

\begin{proof}
    For every $i$ there exists some $j_i$ so that $|U_i \cap V_{j_i}| \geq |U_i| / t = n/t^2$ (here we are ignoring any rounding since it is inconsequential to the argument).
    Like in the proof of Claim \ref{typical cov}, we reveal the subgraphs $G[U_i]$ in stages.
    In the first stage, we reveal $G[U_i \cap V_j]$ for all $i$ and $j$.
    
    In stage two, we reveal the remaining entries in $G[U_i \setminus V_{j_i}]$ for each $i$.
    Unlike in the proof of Claim \ref{typical cov}, we might have too few unrevealed edges at the end of the first stage to use Lemma \ref{lem: one row} to estimate the probability that $G[U_i \cap V_{j_i}]$ and $G[U_i \setminus V_{j_i}]$ are compatible in $G[U_i]$ (e.g., if $G[U_i]$ intersects only two parts of $\mathcal V$).
    Instead, we simply bound this probability of compatibility from above by 1.
    
    In the third step, we reveal the rest of the edges in $G[U_i]$, those that cross between $G[U_i \cap V_{j_i}]$ and $G[U_i \setminus V_{j_i}]$.
    The probability that this produces a good subgraph is at most what it would be given that $G[U_i\cap V_{j_i}]$ and $G[U_i \setminus V_{j_i}]$ are compatible, $\frac{1+o(1)}{q^{|U_i|-1}}$ by Lemma \ref{lem: both ways} and the assumption that $|U_i \setminus V_{j_i}| \geq \log^2n$.
    By the same argument used in the proof of Claim \ref{typical cov}, the revealing steps within each $G[U_i]$ are independent of one another and we may repeat this argument on  $G[V_i]$'s as well.
\end{proof}

Now for the remaining atypical pairs.
Further split $\mathcal B_2$ into $\mathcal B_2 = \cup_{s=1}^t\mathcal B_{2,s}$, where $\mathcal B_{2,s}$ is the set of atypical pairs $(\mathcal U, \mathcal V)$ where the number of $i$ and $j$ such that $|U_i\setminus V_j|$ or $|V_j\setminus U_i|$ is at most $\log^2n$ is exactly $s$.
In other words, $\mathcal B_{2,s}$ is the set of atypical pairs that have $s$ parts (nearly) in common.
Let us estimate the probability that a pair of partitions in $\mathcal B_{2,s}$ are both simultaneously $(r,q)$-partitions.

\begin{claim}\label{claim: B2}
    For any pair $(\mcal{U}, \mcal{V})\in \mathcal B_{2,s}$, we have
    \begin{equation*}
        \mathbb P[\mcal{U}\text{ and }\mcal{V}\text{ are $(r, q)$-partitions}] \leq \frac{1+o(1)}{q^{2n}}\cdot q^{s\lceil n/t\rceil  + 2(t-s)}.
    \end{equation*}
\end{claim}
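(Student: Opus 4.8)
The plan is to mimic the revealing scheme from the proof of Claim~\ref{claim: B1}, but now carefully account for the $s$ parts that the two partitions have (nearly) in common. Write $\mathcal U = \{U_1,\ldots,U_t\}$ and $\mathcal V = \{V_1,\ldots,V_t\}$, and let $\mathcal I$ be the set of ordered pairs $(i,j)$ with $|U_i\triangle V_j|$ small (i.e.\ $\min(|U_i\setminus V_j|,|V_j\setminus U_i|)\le \log^2 n$); by definition of $\mathcal B_{2,s}$ there are exactly $s$ such pairs. The key structural observation is that since all parts are balanced (of size $\lfloor n/t\rfloor$ or $\lceil n/t\rceil$), if $|U_i\triangle V_j|$ is small then $U_i$ and $V_j$ are ``the same part'' for the purpose of the argument: no other $V_{j'}$ can also be close to $U_i$, and no other $U_{i'}$ close to $V_j$, so the $s$ coincident pairs form a partial matching between a set $S_{\mathcal U}$ of $s$ parts of $\mathcal U$ and a set $S_{\mathcal V}$ of $s$ parts of $\mathcal V$.

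First I would handle the coincident parts crudely. For each of the $s$ pairs $(i,j)\in\mathcal I$, the events ``$G[U_i]$ is good'' and ``$G[V_j]$ is good'' are essentially the same event (they differ only in $O(\log^2 n)$ vertices), so rather than extract independent randomness from both I simply bound the probability that $G[U_i]$ is good by $1$ when accounting for $V_j$ later; this is where the factor $q^{s\lceil n/t\rceil}$ comes from — we are ``paying'' up to $q^{n_i}\le q^{\lceil n/t\rceil}$ for each of the $s$ parts we decline to analyze independently. Concretely: run the three-stage revealing process of Claim~\ref{claim: B1} on all $t$ subgraphs $G[U_i]$, obtaining that $\mathcal U$ is an $(r,q)$-partition with probability at most $(1+o(1))\prod_i q^{-(|U_i|-1)} \le (1+o(1)) q^{-n} q^t$. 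Then run the revealing process on the parts $V_j$ with $j\notin S_{\mathcal V}$ only — these are $t-s$ parts, each intersecting (since $(\mathcal U,\mathcal V)$ is still a pair of balanced partitions, and $V_j$ is not close to any single $U_i$) enough parts of $\mathcal U$ in $\ge \log^2 n$ vertices to supply the randomness needed for Lemma~\ref{lem: both ways}, exactly as before; and crucially all the edges needed for these $t-s$ revelations were not touched when we processed $\mathcal U$, since the newly revealed edges in $G[V_j]$ cross between distinct pieces $U_i\cap V_j$ and lie strictly inside $G[V_j]$, not inside any $G[U_{i'}]$. This contributes a factor at most $(1+o(1))\prod_{j\notin S_{\mathcal V}} q^{-(|V_j|-1)} \le (1+o(1)) q^{-(n - s\lceil n/t\rceil)} q^{t-s}$, using that each omitted part has size at most $\lceil n/t\rceil$. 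Multiplying the two bounds and absorbing the $(1+o(1))$ factors gives
\[
\mathbb P[\mathcal U\text{ and }\mathcal V\text{ are }(r,q)\text{-partitions}] \le \frac{1+o(1)}{q^{2n}}\cdot q^{s\lceil n/t\rceil + 2(t-s)},
\]
as desired.

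The main obstacle — and the step to be most careful about — is verifying that the revelations for the $t-s$ ``fresh'' parts $V_j$ ($j\notin S_{\mathcal V}$) genuinely use only previously unrevealed randomness, and that each such $V_j$ still has enough structure (three parts of $\mathcal U$ met in $\ge\log^2 n$ vertices, say, or at least the two-other-parts condition needed for the compatibility step via Lemma~\ref{lem: one row}) for the argument of Claim~\ref{claim: B1} to apply verbatim. For this one should note that a part $V_j$ with $j \notin S_{\mathcal V}$ is, by definition, \emph{not} nearly contained in any single $U_i$, so its vertices are spread across at least two parts of $\mathcal U$ with $\ge \log^2 n$ vertices in the two largest pieces (and one can even arrange three by a balancedness/pigeonhole argument, though two suffices for the weaker compatibility bound used here); and the independence of the revealing steps across parts is inherited from the proof of Claim~\ref{typical cov}. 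I do not expect the combinatorics here to be subtle beyond this bookkeeping — the point is simply that ``common part'' $\Rightarrow$ ``pay $q^{\lceil n/t\rceil}$ instead of extracting randomness,'' and everything else is the now-standard staged-revelation computation.
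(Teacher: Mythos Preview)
Your outline is close to the paper's argument, but as written it does not prove the stated bound. Two related problems:

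\medskip
\textbf{(1) The arithmetic.} Multiplying your two displayed bounds gives
\[
(1+o(1))\,q^{-n}q^{t}\cdot (1+o(1))\,q^{-(n-s\lceil n/t\rceil)}q^{t-s}
=\frac{1+o(1)}{q^{2n}}\cdot q^{\,s\lceil n/t\rceil + 2t - s},
\]
not $q^{\,s\lceil n/t\rceil + 2(t-s)}$. Since $2t-s>2(t-s)$ for every $s\ge 1$, your bound is genuinely weaker than the claim by a factor of $q^{s}$.

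\medskip
\textbf{(2) Running the B1 process on the coincident parts of $\mathcal U$ is not justified.} For a coincident $U_i$ (nearly equal to some $V_j$), every $V_{j'}$ with $j'\neq j$ satisfies $|U_i\cap V_{j'}|\le |U_i\setminus V_j|\le \log^2 n$, so the pigeonhole choice $j_i$ with $|U_i\cap V_{j_i}|\ge n/t^2$ is forced to be $j_i=j$. But then $|U_i\setminus V_{j_i}|\le \log^2 n$, which can be as small as $0$, and the size hypothesis $s=\omega(\log t)$ of Lemma~\ref{lem: both ways} fails. So the staged revelation of Claim~\ref{claim: B1} simply does not apply to those $s$ parts.

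\medskip
The paper repairs both issues at once by treating the $s$ coincident parts differently: for $i\in S_{\mathcal U}$ it applies Lemma~\ref{lem: symmetric} directly to get $\mathbb P[G[U_i]\text{ good}]\le (1+o(1))q^{-|U_i|}$ (and bounds the corresponding $V_j$-event trivially by $1$), and then runs the Claim~\ref{claim: B1} argument only on the remaining $t-s$ parts of \emph{both} $\mathcal U$ and $\mathcal V$. This produces
\[
(1+o(1))\,q^{-\sum_{i\in S_{\mathcal U}}|U_i|}\cdot q^{-\sum_{i\notin S_{\mathcal U}}(|U_i|-1)}\cdot q^{-\sum_{j\notin S_{\mathcal V}}(|V_j|-1)}
\le \frac{1+o(1)}{q^{2n}}\cdot q^{\,s\lceil n/t\rceil + 2(t-s)},
\]
recovering exactly the missing factor of $q^{s}$. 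Equivalently, you could use Lemma~\ref{lem: symmetric} on \emph{all} of $\mathcal U$ (getting $(1+o(1))q^{-n}$ outright) and then run the B1 process on the $t-s$ fresh $V_j$'s; that gives the even stronger exponent $s\lceil n/t\rceil + (t-s)$, which of course also implies the claim.
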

\begin{proof}
    Since for every $i$, $|U_i\setminus V_j|\leq \log^2n$ can hold for at most one value of $j$, we may, without loss of generality, assume that $|U_i\setminus V_i|\leq \log^2n$ for $i \leq s$, i.e., that the first $s$ parts of both $\mathcal U$ and $\mathcal V$ are nearly identical.
    We have by Lemma \ref{lem: symmetric} that
    \begin{align*}
    \mathbb P[G[U_i] \text{ and }G[V_i] \text{ are good subgraphs, }i\leq s] &\leq \mathbb P[G[U_i] \text{ is a good subgraph, }i\leq s]\\
    & \leq (1+o(1))q^{-\sum_{i\leq s}|U_i|}.
    \end{align*}
    Then we apply the argument from the proof of Claim \ref{claim: B1} to the remaining $t-s$ parts of both $\mathcal U$ and $\mathcal V$.
    Specifically, for $i > s$, parts $U_i$ and $V_i$ have probability around $1/q^{|U_i|-1}$ and $1/q^{|V_i|-1}$ of being good, respectively.
    In total, the desired probability is then
    \begin{align*}
        (1+o(1))q^{-\sum_{i\leq s}|U_i|}\cdot q^{-\sum_{i > s}|U_i|}\cdot q^{-\sum_{i>s}|V_i|}\cdot q^{2(t-s)} \leq (1+o(1))q^{-2n}\cdot q^{s\lceil n/t\rceil}\cdot q^{2(t-s)}.
    \end{align*}
    This completes the proof.
\end{proof}

Next we estimate the size of each $\mathcal B_{2,s}$.

\begin{claim}\label{claim: bads}
    For any $1\leq s\leq t$,
    \[
    |\mcal B_{2,s}| \leq M(n, t)^2 \cdot q^{-s\lfloor n/t\rfloor}\cdot o(1).
    \]
\end{claim}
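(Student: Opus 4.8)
The plan is a direct counting argument. Fix the first partition $\mcal{U}\in\mcal{P}_t$; there are at most $M(n,t)$ choices for it, so it suffices to bound, for each fixed $\mcal{U}$, the number of $\mcal{V}$ with $(\mcal{U},\mcal{V})\in\mcal{B}_{2,s}$ by $M(n,t)\cdot q^{-s\lfloor n/t\rfloor}\cdot o(1)$. The first thing I would record is a structural observation: since both partitions are balanced and $\log^2 n=o(n/t)$, a part $U_i$ can satisfy $|U_i\setminus V_j|\le\log^2 n$ or $|V_j\setminus U_i|\le\log^2 n$ for at most one index $j$, and in that case balancedness forces $|U_i\triangle V_j|=O(\log^2 n)$. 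Hence the $s$ ``near-coincidences'' that define membership in $\mcal{B}_{2,s}$ form a partial matching between the parts of $\mcal{U}$ and the parts of $\mcal{V}$; after relabelling we may assume $|U_i\triangle V_i|=O(\log^2 n)$ for $1\le i\le s$.

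Next I would count the $\mcal{V}$ with $(\mcal{U},\mcal{V})\in\mcal{B}_{2,s}$ by building them in three steps. First, choose which $s$ parts of $\mcal{U}$ take part in the matching: at most $\binom{t}{s}=O(1)$ ways. Second, for each matched index $i$, obtain $V_i$ from $U_i$ by deleting a set of at most $\log^2 n$ vertices and inserting a set of at most $\log^2 n+1$ vertices (the extra one forced by the fact that $|U_i|$ and $|V_i|$ differ by at most $1$); a single part admits at most $n^{O(\log^2 n)}$ such perturbations, so over all $s\le t$ matched parts there are at most $n^{O(\log^2 n)}=e^{O(\log^3 n)}$ choices. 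Third, partition the remaining $m:=n-\sum_{i\le s}|V_i|$ vertices into the $t-s$ unmatched parts of $\mcal{V}$: by the multinomial identity $\sum_{k_1+\cdots+k_{t-s}=m}\binom{m}{k_1,\dots,k_{t-s}}=(t-s)^m$ this can be done in at most $(t-s)^m$ ways. Since each matched part has $|V_i|\ge\lfloor n/t\rfloor-O(\log^2 n)$, we get $m\le(t-s)n/t+O(\log^2 n)$, and hence $(t-s)^m\le(t-s)^{(t-s)n/t}e^{O(\log^2 n)}$. Combining the three steps,
\[
    |\mcal{B}_{2,s}|\le M(n,t)\cdot(t-s)^{(t-s)n/t}\cdot e^{O(\log^3 n)}.
\]

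It remains to compare the right-hand side with $M(n,t)^2 q^{-s\lfloor n/t\rfloor}$. Using the estimate $M(n,t)\ge t^n n^{-O(1)}$ (already implicit in the computation of $\E[X]$ above) together with $q^{-s\lfloor n/t\rfloor}\ge q^{-sn/t}$, it is enough to prove
\[
    \left(\frac{(t-s)^{(t-s)/t}\,q^{s/t}}{t}\right)^{n} e^{O(\log^3 n)}=o(1),
\]
which holds as soon as the base of the $n$-th power is strictly less than $1$. This inequality is the crux of the argument, and it is exactly where the choice $t=q+1$ enters: raising to the $t$-th power, we need $(t-s)^{t-s}q^{s}<t^t=(q+1)^{q+1}$, and since $0\le t-s\le q$ whenever $1\le s\le t$ we have $(t-s)^{t-s}\le q^{t-s}$ (with the convention $0^0:=1$), so $(t-s)^{t-s}q^{s}\le q^{t}=q^{q+1}<(q+1)^{q+1}$. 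Because this inequality is strict, the exponentially small factor swallows the subexponential errors $e^{O(\log^3 n)}$ and $n^{O(1)}$, completing the proof.

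Setting aside the routine estimates, the steps I expect to require the most care are the bookkeeping of these subexponential error factors --- in particular confirming that the gain from $(t-s)^{t-s}q^s<(q+1)^{q+1}$ genuinely dominates them --- and the structural claim that the near-coincident parts form a matching, since that is precisely what keeps the perturbation count down to $n^{O(\log^2 n)}$ rather than something exponential.
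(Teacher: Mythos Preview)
Your proof is correct and follows essentially the same counting strategy as the paper: fix $\mcal{U}$, choose and perturb the $s$ nearly-coincident parts at subexponential cost, bound the remaining $t-s$ parts by $(t-s)^{(t-s)n/t}$, and compare with $M(n,t)\,q^{-s\lfloor n/t\rfloor}$ via $M(n,t)\sim t^n$. Your packaging of the decisive inequality as $(t-s)^{t-s}q^s\le q^{q+1}<(q+1)^{q+1}$ is equivalent to, and arguably cleaner than, the paper's route through $(q+1)^{-s\lfloor n/t\rfloor}\le q^{-s\lfloor n/t\rfloor}e^{-\Theta(n)}$.
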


\begin{proof}
    There are $M(n, t)$ ways to choose the first partition $\mcal{U}$ and at most $t^{2s}$ ways to choose disjoint pairs $(U_{i_\ell},V_{j_\ell})$ for which $U_{i_{\ell}}$ and $V_{j_{\ell}}$ will overlap significantly (i.e., in more than $n/t-\log^2n$ vertices).
    Observe that if $U_i$ and $V_j$ overlap significantly, the $V_j$ cannot overlap significantly with any other $U_{\ell}$. 
    
    Consider such a pair $(U_{i},V_{j})$.
    At most $\log^2n$ of the vertices in $V_j$ miss the vertices in $U_i$ and land in the remaining $n - |U_i|$ vertices of $G$.
    There are then at most
    \[
    \binom{\lceil n/t\rceil }{\log^2n}^s\cdot \binom{n}{\log^2n}^s \leq \binom{n}{\log^2n}^{2s}
    \]
    ways to choose the elements of the $s$ parts of $\mcal{V}$ that overlap significantly with $\mcal{U}$.
    There are at most $n-s\lfloor n/t\rfloor$ vertices and $t-s$ parts of $\mcal{V}$ left to fill and there are at most $M(n-s\lfloor n/t\rfloor, t-s)$ ways to do this. So far, we have that
    \begin{equation*}
    \begin{split}
    |\mcal{B}_{2,s}|&\leq M(n,t)\cdot t^{2s}\cdot \binom{n}{\log^2n}^{2s}\cdot M\left(n-s\lfloor n/t\rfloor, t-s\right) \\
    &\leq M(n,t)\cdot M\left(n-s\lfloor n/t\rfloor, t-s\right)\cdot (tn^{\log^2n})^{2s},
    \end{split}
    \end{equation*}
    for $n$ sufficiently large.
    For any fixed $s$, it suffices to show that
    \[
    M\left(n - s\lfloor n/t\rfloor, t-s\right) \leq M(n,t)\cdot q^{-sn/t}\cdot o\left(n^{-2s\log^2n}\right).
    \]
    
    By Stirling's approximation we have
    \[
    M(n, t) \sim \frac{t^n}{n^{(t-1)/2}}\cdot C(q),
    \]
    where $C(q)$ is some constant depending only on $q$.
    We bound $M(n - s\lfloor n/t \rfloor, t-s)$ as follows:
    \[
    \begin{split}
        M(n-s\lfloor n/t \rfloor, t-s) &\leq (t-s)^{n - s\lfloor n/t \rfloor}\\
        &\leq t^nt^{-s\lfloor n/t \rfloor}\\
        &\sim  \frac{1}{C(q)}\cdot M(n, t)\cdot (q+1)^{-s\lfloor n/t \rfloor}\cdot n^{t/2 - 1} \\
        &\leq \frac{1}{C(q)} \cdot M(n,t)q^{-s\lfloor n/t \rfloor}e^{-s\lfloor n/t \rfloor/q}\cdot n^{t/2-1} \\
        &= M(n,t)q^{-s\lfloor n/t \rfloor}o(n^{-2s\log^2n}).
    \end{split}
    \]
    This completes the proof. 
\end{proof}

By Claim \ref{typical cov}, in the typical case we have
\[
    \sum_{(\mcal{U}, \mcal{V})\in \mcal{T}_t}\Cov(X_{\mcal{U}}, X_{\mcal{V}}) \leq M(n, t)^2\cdot q^{-2n}\cdot o(1) = o(\E[X]^2).
\]
In the atypical case, we use Claim \ref{atyp size} to bound the size of $\mathcal B_1$ and Claim \ref{claim: B1} to estimate the covariance for each term here.
We similarly use Claims \ref{claim: bads} and \ref{claim: B2} to estimate the size of $\mathcal B_{2,s}$ and the covariance of each term here, respectively.
\[
\begin{split}
    \sum_{(\mcal{U}, \mcal{V})\in \mcal{T}_t^C}\Cov(X_{\mcal{U}}, X_{\mcal{V}}) & = \sum_{(\mathcal U, \mathcal V)\in \mathcal B_1}\Cov(X_{\mathcal U}, X_{\mathcal V}) + \sum_{s=1}^t\sum_{(\mathcal U, \mathcal V)\in \mathcal B_{2,s}}\Cov(X_{\mathcal U}, X_{\mathcal V})\\
    &\leq |\mathcal B_1|\cdot \frac{1+o(1)}{q^{2n}}(q^{2t}-1) + \sum_{s=1}^t|\mathcal B_{2,s}|\cdot \frac{1+o(1)}{q^{2n}}(q^{s\lceil n/t\rceil + 2(t-s)}-1)\\
    &\leq M(n,t)^2\cdot e^{-\epsilon n}\cdot \frac{1+o(1)}{q^{2n}}\cdot (q^{2t}-1) + M(n,t)^2\cdot o(1)\cdot \frac{1+o(1)}{q^{2n}}\cdot \sum_{s=1}^tq^{2(t-s)}\\
    &= o(\E[X]^2).
\end{split}
\]
Hence, $\Var[X] = o(\E[X]^2)$, so there exists an $(r,q)$-partition with high probability.

\bibliographystyle{abbrv}
\bibliography{partitioning}

\appendix

\section{Proof of Equation (\ref{calc for appen})}

The proof is based on bounding the multinomial coefficient in terms of the entropy function.
For any $\boldsymbol \alpha \in [0,1]^q$ with $\alpha_0 + \cdots + \alpha_{q-1} = 1$, its entropy (or really, the entropy of the corresponding random variable) is given by
\[
H(\boldsymbol \alpha) = -\sum_{i=0}^{q-1} \alpha_i \log_2\alpha_i,
\]
where we extend $x\mapsto x\log_2 x$ by continuity to the origin with $0\cdot \log_20 \coloneqq 0$.
If $p\in [0,1]$, then we define the binary entropy, $H(p) \coloneqq H((p, 1-p))$.
Now for $x\in[0,1]$ and $\boldsymbol \alpha$ as above, define
\[
h(x)=H(\boldsymbol \alpha)x+H(x)-\log_2q\cdot x.
\]
Some basic calculus reveals the following properties of $h(x)$:
\begin{enumerate}
\item $h'(x)=H(\boldsymbol \alpha)+\log_2\frac{1-x}{x}-\log q$ for $x\in (0,1)$;
\item $h''(x)=-\frac{1}{\ln 2\cdot x(1-x)}<0$ for $x\in (0,1)$;
\item $h(0)=0$, $h(1)=H(\boldsymbol \alpha)-\log_2q\le 0$, with $h(1) = 0$ if and only if $\alpha_i = 1/q$ for all $i$ (we call this the ``uniform case'');
\item for $0<x\le \frac{1}{q+1}$, $h'(x)\ge \log_2\frac{1-x}{x}-\log_2q\ge 0$, implying $h(x) \ge 0$ in this range;
\item $h'(x)=0$ in a unique point in $(0,1)$, this is a maximum point of $h(x)$ since $h''(x)<0$ in the interval. Hence $h(x)>0$ in this point.
\end{enumerate}

Define $x_0=\max\{x\in (0,1]: h(x)=0\}$.
This is well defined due to item (3) above.
In the non-uniform case we can see that $x_0<1$ and, due to the maximality of $x_0$, we have $h'(x_0)\le 0$ (in fact, $h'(x_0)<0$).
In this case, suppose $h'(x_0)=-a$ for $a>0$. Then $h'(a)<-a$ for $x>x_0$ by (2).
This implies that 
for $x=x_0+t$ (assuming $x_0+t<1$), $h(x)\le -at$ by the intermediate value theorem. Also, for $x=x_0-t$ and $t>0$ small enough, we have $h(x)=\Omega(at)$, due to the continuity of $h''(x)$.

Let now, for an integer $k\in [0,n]$:
\[
g(k)=\log_2  \left(\binom{k}{k_0,\ldots,k_{q-1}}\binom{n}{k}q^{-k}\right)\,.
\]
By the mean value theorem and the estimate (see, e.g. \cite{info2004}, Lemma 2.2)
\[
\log_2 \binom{k}{k_0, \ldots, k_{q-1}} = H\big((k_0/k, \ldots, k_{q-1}/k)\big)k - o(k),
\]
we have
\[
g(k)=H(\boldsymbol \alpha)k+H(k/n)n-\log_2 q\cdot k+o(n)\,,
\]
implying:
\begin{eqnarray*}
\frac{g(k)}{n}&=&H(\alpha_0,\ldots,\alpha_{q-1})\frac{k}{n}+H(k/n)-\log_2 q\cdot\frac{k}{n}+o(1)\\
&=& h(k/n)+o(1)\,.
\end{eqnarray*}
Assuming that $x_0<1$, choose an integer $k\in [0,n]$ satisfying $k\ge x_0n+\epsilon n$, then $g(k)/n=-\Theta(\epsilon)+o(1)$, implying that $2^{g(k)}=2^{-\Theta(n)}$. In the opposite direction (and in every case, including the uniform one) choose an integer $k\in [0,n]$ satisfying $k\le x_0n-\epsilon n$, for small enough $\epsilon>0$; we can conclude that $2^{g(k)}=2^{\Theta(n)}$.

\bigskip

\end{document}